\documentclass[12pt]{amsart}

\usepackage{color,graphicx,amssymb,latexsym,amsfonts,txfonts,amsmath,amsthm}
\usepackage{pdfsync,enumitem}
\usepackage{amsmath,amscd}
\usepackage[all,cmtip]{xy}

\usepackage{tikz}
\usetikzlibrary{matrix}

\input epsf

\usepackage{hyperref}
\hypersetup{
    colorlinks=true,       % false: boxed links; true: colored links
    linkcolor=blue,          % color of internal links
    citecolor=blue,        % color of links to bibliography
    filecolor=blue,      % color of file links
    urlcolor=blue           % color of external links
}

\newtheorem{theo}{Theorem}
\newtheorem{coro}{Corollary}
\newtheorem{prop}{Proposition}

\newtheorem{claim}{Claim}

\theoremstyle{remark}
\newtheorem{rema}{\bf Remark}
\newtheorem{example}{\bf Example}

\begin{document}

\title{Infinite-type Schottky groups and group actions on infinite-type surfaces}
\date{}

\author{Rub\'en A. Hidalgo}
\address{Departamento de Matem\'atica y Estad\'{\i}stica\\
Universidad de La Frontera, Francisco Salazar 01145, Temuco, Chile}
\email{ruben.hidalgo@ufrontera.cl}

\thanks{Partially supported by ANID Fondecyt Regular  Grant 1230001}

\subjclass[2010]{Primary 30F40; 30F20; 32G15}
\keywords{Riemann surfaces, Schottky groups, Automorphisms, Uniformization}

\begin{abstract}
We introduce a certain class of purely loxodromic free Kleinian groups, called infinite-type Schottky groups, which are defined by a suitable collection of simple loops on the Riemann sphere, in a similar way as in the case of Schottky groups of finite rank. An infinite-type Schottky group $\Gamma$ admits a $\Gamma$-invariant connected component $\Omega_{\Gamma}$ of its region of discontinuity $\Omega(\Gamma)$, such that every other connected component of $\Omega(\Gamma) \setminus \Omega_{\Gamma}$ is a topological disc with trivial $\Gamma$-stabilizer, and $\Omega_{\Gamma}/\Gamma$ is an infinite-type Riemann surface without planar ends. 

Let $F$ be a torsion-free purely hyperbolic Fuchsian group of the first kind such that $\Sigma_{F}={\mathbb H}^{2}/F$ is an infinite-type Riemann surface with no planar ends.
Then there exists an infinite-type Schottky group $\Gamma$ such that $\Sigma_{F}$ is isomorphic to $\Omega_{\Gamma}/F$ (retrosection theorem). 

If $G < {\rm Aut}(\Sigma_{F})$ acts freely and $\Sigma_{F}/G$ is of finite-type, then we observe that
(i) the existence of some infinite Schottky $\Gamma$ such that $\Omega_{\Gamma}/\Gamma$ and $\Sigma_{F}$ are conformally equivalent and for which $G$ lifts to a group of automorphisms of $\Omega_{\Gamma}$, is equivalent to (ii) the existence of a $G$-invariant collection ${\mathcal F}$ of pairwise disjoint essential simple loops on $\Sigma_{F}$ such that each connected component of $\Sigma_{F} \setminus {\mathcal F}$ is a finite planar surface. This generalizes the situation for the case of closed Riemann surfaces and Schottky groups of finite rank.

\end{abstract}

\maketitle

%%%%%%%%%%%%%%%%%%%%%%%%%%%%%
%%%%%%%%%%%%%%%%%%%%%%%%%%%%%
\section{Introduction}
Let $\Sigma$ be a connected hyperbolic Riemann surface without planar ends. So, either: (i) $\Sigma$ is a closed Riemann surface of some genus $g \geq 2$, or (ii) $\Sigma$ has infinite-type (i.e., its fundamental group is not finitely generated) such that its ends are accumulated by genus, for example, the Loch Ness monster, the Jacob ladder, and the Blooming three (for the classification of infinite type surfaces, see \cite{Ker, Ian}). The Riemann surface structure on $\Sigma$ is provided by a torsion-free purely hyperbolic Fuchsian group $F < {\rm PSL}_{2}({\mathbb R})$ such that there exists a regular covering $P_{F}:{\mathbb H}^{2} \to \Sigma$ with ${\rm Deck}(P_{F})=F$. The group $F$ is unique up to ${\rm PSL}_{2}({\mathbb R})$-conjugation. 

In this paper, we will assume that the Riemann surface $\Sigma$ is provided by a Fuchsian group $F$ of the {\it first kind}, i.e., its limit set is all of the the boundary of ${\mathbb H}^{2}$ (this is always the case if $\Sigma$ is a closed surface).

%%%%%%%%%%%%%%%
\subsection{Uniformizations}
An {\it uniformization} of $\Sigma$ is a triple $(\Omega,K,P:\Omega \to \Sigma)$, where $K$ is a Kleinian group (i.e., a discrete subgroup of ${\rm PSL}_{2}({\mathbb C})$, seen as the group of condormal automorphisms of the Riemann sphere $\widehat{\mathbb C}$), $\Omega$ is a non-empty $K$-invariant connected component of its region of discontinuity, and $P$ is a regular covering map with ${\rm Deck}(P)=\Gamma$. For instance, $({\mathbb H}^{2},F,P_{F}:{\mathbb H}^{2} \to \Sigma)$ is an uniformization, called a {\it Fuchsian uniformization}.

The collection of uniformizations of $\Sigma$ is partially ordered: $(\Omega_{2},K_{2},P_{2}:\Omega_{2} \to \Sigma) \leq (\Omega_{1},K_{1},P_{1}:\Omega_{1} \to \Sigma)$ if and only if there is a covering map $Q:\Omega_{1} \to \Omega_{2}$ such that $P_{1}=P_{2} \circ Q$. The highest uniformizations are those for which $\Omega$ is simply-connected (for instance, the case of Fuchsian uniformizations). We are interested in the lowest uniformizations of $\Sigma$.

%%%%%%%%%%%%%%
\subsection{Schottky system of loops}
A {\it Schottky system of loops} on $\Sigma$ is a collection ${\mathcal F}$ of pairwise disjoint essential simple loops on $\Sigma$, no two of them parallels, such that each connected component of $\Sigma \setminus {\mathcal F}$ is a finite planar surface (its boundary consists of a finite sub-collection of ${\mathcal F}$ and two boundaries might or not be defined by the same loop). Examples of such a system of loops are those provided by a pant decomposition. 
As every essential simple loop in $\Sigma$ is isotopic to a (unique) simple closed geodesic in the hyperbolic structure of $\Sigma$,  then (up to a suitable isotopy) we may change each simple loop in ${\mathcal F}$ by its unique isotopic geodesic loop. As we are assuming that $F$ is of the first kind,  the new collection (so obtained by geodesics) is also a Schottky system of loops in 
$\Sigma$. 
The  Schottky system of loops ${\mathcal F}$ determines a connected orientable $3$-manifold $M_{(\Sigma,{\mathcal F})}$, with boundary $\Sigma$. If $\Sigma$ is closed (respectively, of infinite type), this manifold is called a {\it handlebody} (respectively, an {\it infinite handlebody} \cite{BM}).

%%%%%%%%%%%%
\subsection{Schottky uniformizations of closed Riemann surfaces}
Let us first consider the well known case when $\Sigma$ is a closed surface of genus $g \geq 2$. In this case,  $F$ is a co-compact Fuchsian group, and the Schottky system of loops ${\mathcal F}$ determines a Schottky group $\Gamma$ of rank $g$, with a connected region of discontinuity $\Omega(\Gamma)$ (its limit set $\Lambda(\Gamma)$ is a Cantor set), and a {\it Schottky uniformization} $(\Omega(\Gamma),\Gamma,P_{\Gamma}:\Omega(\Gamma) \to \Sigma)$. This a lowest uniformization of $\Sigma$ and 
it is characterized by the property that $P_{\Gamma}^{-1}({\mathcal F})$ is a collection of pairwise disjoint simple loops inside $\Omega(\Gamma)$, and it is the highest uniformization with such a property. 
Moreover, inside $P_{\Gamma}^{-1}({\mathcal F})$ there is a sub-collection ${\mathfrak C}=\{C_{1},C'_{1},\ldots,C_{g},C'_{g}\}$, no one of these loops separating others two,  and there is a collection of loxodromic elements $A_{1},\ldots,A_{g} \in \Gamma$, generating $\Gamma$, such that $A_{i}({\rm Ext}(C_{i}))={\rm Int}(C'_{i})$ (where, for each $C \in  {\mathfrak C}$, ${\rm Int}(C)$ is the topological disc bounded by $C$ and disjoint from the other loops in ${\mathfrak C}$; the other disc bounded by $C$ is denoted by ${\rm Ext}(C)$). The collection ${\mathfrak C}$ projects to $\Sigma$ to a sub-collection ${\mathcal H}=\{\alpha_{1},\ldots,\alpha_{g}\} \subset {\mathcal F}$ such that $\Sigma \setminus {\mathcal H}$ is connected and planar.

Let us note that any two Schottky systems of loops on $\Sigma$ produce quasiconformally equivalent Schottky groups.

%%%%%%%%%%%%
\subsection{Schottky uniformizations of infinite-type Riemann surfaces}Now, let us assume $\Sigma$ is of infinite-type (recall that we are assuming the torsion-free and purely hyperbolic Fuchsian group $F$ to be of the first kind).
In Section \ref{Sec:infiniteSchottky} (see Theorem \ref{teoretro}), we observe that the Schottky system ${\mathcal F}$ defines a torsion-free purely loxodromic Kleinian groups $\Gamma$, admitting a $\Gamma$-invariant connected component $\Omega$ of its region of discontinuity $\Omega(\Gamma)$, such that:
(i) each connected component $\Delta$ of $\Omega(\Gamma) \setminus \Omega$ (if any) is a topological disk with trivial $\Gamma$-stabilizer, and (ii) there is an uniformization $(\Omega,\Gamma,P_{\Gamma}:\Omega \to \Sigma)$ of $\Sigma$; we call it an {\it infinite-type Schottky uniformization} of $\Sigma$.
  As for the previous case, $\Gamma$ is characterized by the property that $P_{\Gamma}^{-1}({\mathcal F})$ is a collection of pairwise disjoint simple loops. Moreover, inside such a collection of lifted loops there is a sub-collection ${\mathfrak C}=\{C_{i},C'_{i}\}_{i \in {\mathbb N}}$, such that (a) no one of these loops is separating others two, (b) each of of these loops is isolated from the others, (c) their (spherical) diameters go to zero, and (d)  there is a collection of loxodromic elements $A_{i} \in \Gamma$, $i \in {\mathbb N}$, generating $\Gamma$, such that $A_{i}({\rm Ext}(C_{i}))={\rm Int}(C'_{i})$. The collection ${\mathfrak C}$ projects to $\Sigma$ to a sub-collection ${\mathcal H}$ of ${\mathcal F}$ such that $\Sigma \setminus {\mathcal H}$ is connected and planar. 
 
In this infinite-type situation, there are different Schottky systems of loops on $\Sigma$ for which the induced infinite-type Schottky groups are not topologically conjugate. 
 
Let us remark that, in \cite{BM}, the authors constructed certain kind of infinitely generated Schottky groups by using 
 ``admissible configuration" of loops on the Riemann sphere. Their groups have the property that the limit sets are Cantor sets. The problem with these Schottky groups is (it seems??) that not every Riemann surface (uniformized by a Fuchsian group of the first kind) can be so uniformized (anyway, they prove that one may change the Riemann surface structure of $\Sigma$ to obtain a Schottky uniformization). In Section \ref{Sec:Schottkyinfinito},  following \cite{BM}, we recall the definition of these groups together with some of their properties.

To construct our groups, we will restrict to a collection of loops which we call Schottky admissible configuration (a particular class of admissible configuration). We will admit the possibility for the limit set not to be totally disconnected, but we ask for the existence of an invariant connected component of its region of discontinuity.

%%%%%%%%%%%%%%%%%%%%%%%%%%%%%%%%%%%%%%%
\subsection{Schottky uniformizations and groups of conformal automorphisms}
Let us consider a subgroup $G$ of the group ${\rm Aut}(\Sigma)$ of conformal automorphisms of $\Sigma$.
The hyperbolic metric of ${\mathbb H}^{2}$ induces a complete hyperbolic metric on $\Sigma$. The group $G$ turns out to be a group of isometries in such a metric.

We say that the group $G$ {\it lifts} with respect to a Schottky uniformization $(\Omega,\Gamma,P_{\Gamma}:\Omega \to \Sigma)$ of $\Sigma$ 
if for every $\varphi \in G$ there exists a $\psi \in {\rm Aut}(\Omega)$ such that $P_{\Gamma} \circ \psi = \varphi \circ P_{\Gamma}$. 

If there is a Schottky system ${\mathcal F}$ on $\Sigma$ which is $G$-invariant, then $G$ lifts with respect to the induced Schottky uniformization of $\Sigma$ for ${\mathcal F}$. Below, we discuss the converse.

\subsubsection{Case of closed Riemann surfaces}
In this case, $G$ is finite whose order is bounded above by the Hurwitz bound $84(g-1)$, and the existence of a Schottky uniformization for $\Sigma$ for which $G$ lifts is equivalent to the existence of a Schottky system ${\mathcal F}$ which is $G$-invariant.
This is a consequence of the equivariant loop theorem \cite{M-Y} (in \cite{H-M}, there is an argument of such a fact based on Kleinian groups).
Equivalently, the lifting property is the same as finding a handlebody $M=M_{(\Sigma,{\mathcal F})}$ for which the group $G$ extends continuously as an (isomorphic) group of homeomorphisms of $M$.
For instance, if $G$ is either an abelian group acting freely or a dihedral group, then it lifts to a suitable Schottky uniformization (equivalently, extends to some handlebody) \cite{H2, RZ}.

\subsubsection{Case of infinite-type Riemann surfaces}
In this case, $G$ might not be a finite group, but is a countable one.
As it was for the case for the finite-type situation, if there is a Schottky system ${\mathcal F}$ that is $G$-invariant, then $G$ lifts with respect to the infinite-type Schottky uniformization induced by ${\mathcal F}$.
In Theorem \ref{teolifting}, we prove that the lifting property of $G$ to some infinite-type Schottky uniformization of $\Sigma$  is equivalent to the existence of a $G$-invariant Schottky system under the assumption that $\Sigma/G$ is of finite type.

\smallskip

%%%%%%%%%%%%%
\subsection*{Notations}
The symbol $A \leq B$ (respectively, $A \lhd B$) will indicate that $A$ is a subgroup (respectively, normal subgroup) of the group $B$.

%%%%%%%%%%%%%%%%
%%%%%%%%%%%%%%%%
\section{Preliminaries}
%%%%%%%%%%%%%
\subsection{Kleinian groups}
A {\it Kleinian group} is a discrete subgroup of ${\rm PSL}_{2}({\mathbb C})$; the group of conformal automorphisms of the Riemann sphere $\widehat{\mathbb C}$ (also called the M\"obius group). 
If $K$ is a Kleinian group, then its region of discontinuity, denote by $\Omega(K)$, consists of those points $z \in \widehat{\mathbb C}$ such that there is an open set $U_{z}$, containing $z$, for which $A(U_{z}) \cap U_{z}=\emptyset$, with the possible exception of a finite collection of $A \in K$.
Its limit set is $\Lambda(K)=\widehat{\mathbb C} \setminus \Omega(K)$. If $\Omega(K) \neq \emptyset$, then a {\it fundamental domain} for $K$ is an open subset $F \subset \Omega(K)$ such that (i) every point of $\Omega(K)$ is in the $K$-orbit of a point in the closure of $F$, and (ii) no two different points in $F$ belong to the same $K$-orbit. 

In this paper, we will restrict to torsion-free Kleinian groups $K$, with non-empty region of discontinuity, admitting a $K$-invariant connected component. These groups might or not be finitely generated, and they are purely loxodromic. Details on Kleinian groups can be found, for instance, in the book \cite{Maskit:book}.

%%%%%%%%%%%%%
\subsection{Schottky admissible configurations} 
\subsubsection{}
Let ${\mathfrak C}=\{C_{i}\}_{i \in J}$, where $\emptyset \neq J \subseteqq {\mathbb N}$, be a collection of pairwise disjoint simple loops in the Riemann sphere $\widehat{\mathbb C}$.
We will say that ${\mathfrak C}$ a {\it Schottky admissible configuration} if the following three conditions hold.
\begin{enumerate}[leftmargin=15pt]
\item[(S1)] If $J$ is infinite, then the spherical diameters of these loops go to zero, in other words, for every $\varepsilon>0$ there exists a positive integer $N=N_{\varepsilon}$ such that, for every $i \geq N$, it holds that the spherical diameters of $C_{i}$ is less than $\varepsilon$.
\item[(S2)] No loop in ${\mathfrak C}$ separates any other two loops of ${\mathfrak C}$. 
\item[(S3)] No point on any of loops in ${\mathfrak C}$ is the limit of points from the other loops in ${\mathfrak C}$.
\end{enumerate}

Following \cite{BM}, if ${\mathfrak C}$ satisfies the conditions (S2) and (S3), then it is called an {\it admissible configuration}.

\begin{rema}\label{obs1}
\mbox{}
\begin{enumerate}[leftmargin=15pt]
\item If ${\mathfrak C}$ is a finite collection of pairwise disjoint loops, then conditions (S1) and (S3) are trivially satisfied. In particular, finite admissible configurations are also Schottky admissible.

\item There are (infinite) admissible configurations that are not  Schottky admissible (since the condition (S1) might not necessarily be satisfied). Any admissible configuration consisting solely of circles is a Schottky admissible one.

\item Condition (S3) ensures that, for each $C \in {\mathfrak C}$, and every point $z\in C$, there is an open disk $D_{z}$ containing $z$ such that 
$D_{z} \cap ({\mathfrak C}\setminus\{C\})=\emptyset$. In particular, 
there is an open set $U_{C}$ containing $C$ and such that $U_{C} \cap ({\mathfrak C}\setminus\{C\})=\emptyset$.

\item In \cite{BM}, the authors replace (S2) by the following equivalent one: (S2') there exists of a point $z_{0} \in \widehat{\mathbb C} \setminus {\mathfrak C}$ for which no loop in ${\mathfrak C}$ separates any other loop in ${\mathfrak C}$ from $z_{0}$. Such a point might or might not be accumulated by points in the loops in ${\mathfrak C}$ (but we may always choose it not to be accumulated by points in ${\mathfrak C}$ by taking it inside $U_{C_{1}} \setminus C_{1}$). 

\item If $A \in {\rm PSL}_{2}({\mathbb C})$ and ${\mathfrak C}=\{C_{i}\}_{i \in J}$ is an admissible configuration (respectively, a Schottky admissible configuration), then $A({\mathfrak C})=\{A(C_{i})\}_{i \in J}$ is also an admissible configuration (respectively, a Schottky admissible configuration). So, for (S2') there is no loss of generality in assuming that $z_{0}=\infty$ is not accumulated by points in ${\mathfrak C}$ (in this case, the admissible configuration is contained in a bounded domain of ${\mathbb C}$).
\end{enumerate}
\end{rema}

%%%%%%%%%%%%%%%%%
\subsubsection{}
Let ${\mathfrak C}$ be an (not necessarily Schottky) admissible configuration.
Each of the loops $C \in {\mathfrak C}$ bounds two open discs.  Condition (S2) asserts that one of these two discs, denoted as ${\rm Ext}(C)$, contains all other loops of ${\mathfrak C}$. We denote by ${\rm Int}(C)$ the other component. We set 
$${\rm Ext}({\mathfrak C}):=\bigcap_{C \in {\mathfrak C}} {\rm Ext}(C),$$
and let us denote as ${\rm Ext}({\mathfrak C})^{0}$ the interior of ${\rm Ext}({\mathfrak C})$.

We denote by ${\mathfrak C}'$ the collection of limits points of the loops in ${\mathfrak C}$, i.e., the set of points $p \in \widehat{\mathbb C}$ for which there exists an infinite sequence of different loops $E_{j} \in {\mathfrak C}$ and points $z_{j} \in  E_{j}$ such that the sequence $(z_{j})$ accumulates at $p$. In particular, ${\mathfrak C}' \subset {\rm Ext}({\mathfrak C})$.

\begin{rema}\mbox{}
\begin{enumerate}[leftmargin=15pt]
\item Condition (S3) ensures that ${\rm Ext}({\mathfrak C})^{0} \neq \emptyset$ (in particular, ${\rm Ext}({\mathfrak C}) \neq \emptyset$), but it might be that ${\rm Ext}({\mathfrak C})^{0}$ is non-connected. In particular, there are infinitely many points $z_{0} \in {\rm Ext}({\mathfrak C})^{0}$ satisfying condition (S2').
%\item The set ${\rm Ext}({\mathfrak C})$ is non-empty (as any point $z_{0}$, satisfying condition (S2') in Remark \ref{obs1}, belongs to ${\rm Ext}(C)$,  for every $C \in {\mathfrak C}$). 

\item ${\rm Ext}({\mathfrak C}) \setminus {\rm Ext}({\mathfrak C})^{0}= {\mathfrak C}'$. If ${\mathfrak C}$ is a finite collection, then ${\rm Ext}({\mathfrak C})= {\rm Ext}({\mathfrak C})^{0}$ and ${\mathfrak C}'=\emptyset$.

\item If ${\mathfrak C}=\{C_{i}\}_{i \in {\mathbb N}}$ is a Schottky admissible configuration, then condition (S1) ensures the following: 
if $p \in {\mathfrak C}'$, $E_{j} \in {\mathfrak C}$ is an infinite sequence of different loops, and $z_{j} \in  E_{j}$ are such that the sequence $(z_{j})$ accumulates at $p$, then for every choices of points $w_{j} \in {\rm Int}(E_{j})$, the sequence $(w_{j})$ also accumulates at $p$. This property does not hold in general for those admissible configurations that are not Schottky.

\item In \cite[Proposition A.6.]{Maskit:book} (see Remark\ref{ej:bernie}), there is constructed a collection ${\mathfrak C}=\{C_{i},C'_{i}\}_{i \in {\mathbb N}}$ of pairwise disjoint circles satisfying conditions (S1) and (S2), but not (S3); so it is not an admissible configuration. In there, using that configuration of circles, it is constructed a discrete group $\Gamma=\langle A_{1},\ldots\rangle$, where $A_{i}({\rm Ext}(C_{i}))={\rm Int}(C'_{i})$; which is in there called an infinite Schottky group. Below, we will define infinite Schottky groups which are obtained from Schottky admissible configurations, so ours are not the same as those.
\end{enumerate}
\end{rema}

%%%%%%%%%%%%%%%%%%
%\subsection{Schottky system of loops}
%Let $\Sigma$ be a connected, Hausdorff, second-countable, orientable surface with no planar ends (either of finite type or of infinite type).

%A {\it Schottky system of loops} on $\Sigma$ is a (necessarily countable) collection ${\mathcal F}$ of pairwise disjoint essential simple loops, no two of them being parallel, and such that each connected component of $\Sigma \setminus {\mathcal F}$ is a planar surface of finite type. 

%For example, each pant decomposition of $\Sigma$ determines a Schottky system of loops on it. 

%\begin{rema}
%If ${\mathcal F}$ is a Schottky system of loops of $\Sigma$, then it contains a sub-collection ${\mathcal H}$ such that $\Sigma \setminus {\mathcal H}$ is connected and planar. Such a sub-collection is not unique in general.
%\end{rema}

%\medskip

%Each Schottky system of loops ${\mathcal F}$ on $\Sigma$ determines an orientable $3$-manifold $M_{(\Sigma,{\mathcal F})}$ whose boundary is $\Sigma$, and there is a natural bijection between the ends of it and the ends of $\Sigma$. We call $M_{(\Sigma,{\mathcal F})}$ a $\Sigma$-handlebody.

%\begin{lemm}
%Let ${\mathcal F}_{1}$ and ${\mathcal F}_{2}$ be two Schottky system of loops on $\Sigma$. Then $M_{(\Sigma,{\mathcal F}_{1})}$ and $M_{(\Sigma,{\mathcal F}_{2})}$ are homeomorphic $3$-manifolds.
%\end{lemm}

%%%%%%%%%%%%%%%%
%%%%%%%%%%%%%%
\section{Finitely generated Schottky groups}
In this section, we recall a geometrical definition of Schottky groups of finite rank and some of their properties.  Let us fix an integer $g \geq 1$.

%%%%%%%%%%%%%%%%%%%%
\subsection{Schottky groups of rank $g$}
Let ${\mathfrak C}_{g}=\{C_{1},\ldots,C_{g}, C'_{1},\ldots, C'_{g}\}$ be a Schottky admissible configuration,   
and let $D_{g}={\rm Ext}({\mathfrak C})={\rm Ext}^{0}({\mathfrak C})$ be the corresponding domain, of connectivity $2g$, bounded by these loops.

If $A_{1},\ldots, A_{g} \in {\rm PSL}_{2}({\mathbb C})$ are loxodromic elements  such that $A_{i}({\rm Ext}(C_{i}))={\rm Int}(C'_{i})$, then 
the group 
$\Gamma=\langle A_{1},\ldots,A_{g}\rangle < {\rm PSL}_{2}({\mathbb R})$ is called a {\it Schottky group of rank $g$}. 
The set $\{A_{1},\ldots,A_{g}\}$ is called a {\it Schottky set of generators of $\Gamma$} associated to ${\mathfrak C}$.
We say that $\Gamma$ is a  {\it classical Schottky group} if it can be defined by using a Schottky admissible configuration formed solely by circles.

\begin{theo}[\cite{Maskit:Schottky}]
Every Schottky group $\Gamma$ is a purely loxodromic Kleinian group, isomorphic to a free group of rank $g$, with $D_{g}$ as a fundamental domain. If $g \geq 2$, its limit set $\Lambda(\Gamma)$ is a Cantor set (this limit set is the closure of the fixed points of the non-trivial elements of $\Gamma$). Its region of discontinuity $\Omega(\Gamma)=\widehat{\mathbb C} \setminus \Lambda(\Gamma)$ is connected and  $\Omega(\Gamma)/\Gamma$ is a closed Riemann surface of genus $g$. Moreover, $M_{\Gamma}=({\mathbb H}^{3} \cup \Omega(\Gamma))/\Gamma$ is a handlebody of genus $g$, i.e., homeomorphic to the connected sum of $g$ solid tori. Its interior has a complete hyperbolic metric and its conformal boundary is the Riemann surface $\Omega(\Gamma)/\Gamma$.
\end{theo}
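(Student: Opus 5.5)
The plan is to prove everything with the Klein--Maskit combination (ping-pong) argument on the fundamental domain $D_{g}$, and then read off the topology.

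\emph{Discreteness, freeness and the fundamental domain.} For $x \in \{A_{1}^{\pm 1},\ldots,A_{g}^{\pm 1}\}$ put $D(A_{i})={\rm Int}(C'_{i})$ and $D(A_{i}^{-1})={\rm Int}(C_{i})$. By (S2) these $2g$ closed discs are pairwise disjoint, and $A_{i}$ maps $\widehat{\mathbb C}\setminus D(A_{i}^{-1})$ into $D(A_{i})$. Take a nontrivial reduced word $W=x_{n}\cdots x_{1}$. Induction on $n$ shows that $W(D_{g}) \subset D(x_{n})$, so $W(D_{g})\cap D_{g}=\emptyset$. This gives freeness of rank $g$ and discreteness, since a neighbourhood of an interior point of $D_{g}$ is moved off itself by every $W\neq 1$. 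It also shows that distinct translates of $D_{g}$ are disjoint.

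\emph{Structure of the tiling.} The translates $W(\overline{D_{g}})$ tile the nested discs. The points not covered by them are the nested intersections $\bigcap_{n} x_{n}\cdots x_{1}(D(x_{1}))$ along infinite reduced words. Each $A_{i}$ is loxodromic, so it has an attracting fixed point in $D(A_{i})$. Then $\Gamma$ contains no parabolic elements. Indeed, any nontrivial $W$ is conjugate to a cyclically reduced word, which maps some disc $\overline{D(x)}$ strictly inside itself; hence $W$ has an attracting fixed point, and so it is loxodromic.

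\emph{The limit set (main obstacle).} The key step is to show that the nested discs shrink in spherical diameter, so each infinite word determines a single point. For round circles this follows from the contraction of $A_{i}$ on the complement of a neighbourhood of its isometric circle. For general Jordan curves I would instead use hyperbolic geometry, as follows. Give the planar domain $\widehat{\mathbb C}\setminus\bigcup_{x} D(x)$ a slightly enlarged neighbourhood, and use the Schwarz--Pick lemma (a strict contraction of the Poincaré metric on a fixed annular collar) to show that moduli of the annuli $D(x_{n}\cdots)\setminus D(x_{n}\cdots x_{1}\cdots)$ add up. By Grötzsch's inequality the nested discs then shrink to points. Alternatively, invoke quasiconformal conjugation to a classical group.

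\emph{Conclusions.} Once the nested discs shrink to points, $\Lambda(\Gamma)$ is the set of these points, which are exactly the accumulation points of orbits. It is compact, totally disconnected and perfect when $g\geq 2$, so it is a Cantor set. It is the closure of the fixed points because the fixed points of the cyclically reduced words are dense among infinite words. The region of discontinuity $\Omega(\Gamma)=\bigcup_{W}W(\overline{D_{g}})$ is connected, since adjacent tiles share a boundary loop. Gluing $C_{i}$ to $C'_{i}$ via $A_{i}$ on $\overline{D_{g}}$, a sphere with $2g$ holes, gives a closed surface of genus $g$.

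\emph{The 3-manifold.} For $M_{\Gamma}$, extend each loop to a disc in $\mathbb H^{3}$ bounding a half-ball region, for example by using hyperbolic planes after a quasiconformal straightening. The fundamental region in $\mathbb H^{3}\cup\Omega(\Gamma)$ is then a ball with $2g$ boundary discs paired by the $A_{i}$, which yields a genus $g$ handlebody. The complete hyperbolic metric on the interior is inherited from $\mathbb H^{3}$, because $\Gamma$ is discrete and torsion free.
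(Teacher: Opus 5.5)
The paper gives no proof of this statement; it only cites Maskit. Your ping-pong route is the classical self-contained argument, and its two-dimensional part is essentially right. The handlebody part, however, has a genuine gap for non-classical groups.

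\textbf{The gap.} The statement covers non-classical Schottky groups (Marden's examples exist), and the defining loops may be arbitrary Jordan curves. For round circles your argument works. The hyperbolic planes over $C_{i}$ and $C'_{i}$ are paired by the Poincar\'e extension of $A_{i}$. Half-spaces over shrinking discs shrink, so the translates of the region outside the $2g$ planes tile $\mathbb{H}^{3}\cup\Omega(\Gamma)$, and the quotient is a ball with $g$ pairs of boundary discs identified.

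For general loops, ``quasiconformal straightening'' does not work as stated. A quasiconformal homeomorphism of $\widehat{\mathbb C}$ can carry a circle only to a quasicircle, so loops that are not quasicircles cannot be straightened. What you would actually need is:
\begin{enumerate}
\item a quasiconformal $\phi$ with $\phi\Gamma\phi^{-1}$ classical, which is the Bers--Chuckrow theorem quoted after this one, a deeper result resting on the structure theory you are establishing; and
\item a $\Gamma$-equivariant extension of $\phi$ to $\mathbb{H}^{3}$, for instance the conformally natural Douady--Earle extension.
\end{enumerate}
Your sketch supplies neither. To close the gap directly, you could take one of two routes.
\begin{enumerate}
\item Construct pairwise disjoint properly embedded discs $E_{x}\subset\mathbb{H}^{3}$ with $\partial E_{x}=\partial D(x)$ and $A_{i}(E_{A_{i}^{-1}})=E_{A_{i}}$, whose nested $\Gamma$-translates shrink to points. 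Without this you know neither that your $3$-dimensional region is a fundamental region nor that the quotient is compact.
\item Prove that $M_{\Gamma}$ is compact (convex cocompactness). Then use the fact that a compact, orientable, irreducible $3$-manifold with non-empty boundary and free fundamental group of rank $g$ is a genus-$g$ handlebody.
\end{enumerate}

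\textbf{Repairs to the two-dimensional part.}
\begin{enumerate}
\item The nested sets along an infinite reduced word $y_{1}y_{2}\cdots$ are $y_{1}\cdots y_{n-1}(D(y_{n}))$, not $x_{n}\cdots x_{1}(D(x_{1}))$.
\item The shrinking step needs neither Schwarz--Pick nor a separate round case. Your isometric-circle argument does not apply directly anyway when the $C_{i}$ are not the isometric circles of the $A_{i}$. Instead, note that the annulus $y_{1}\cdots y_{n-1}\bigl(D(y_{n})\setminus\overline{y_{n}(D(y_{n+1}))}\bigr)$ is a M\"obius image of one of the finitely many annuli $D(y)\setminus\overline{y(D(y'))}$ with $y'\neq y^{-1}$. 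So these disjoint nested annuli have moduli bounded below by a fixed positive constant, and Gr\"otzsch's inequality forces the nested discs to shrink to points.
\item Drop the fallback ``quasiconformal conjugation to a classical group'' at that step. It presupposes the limit-set structure you are proving, so it is circular there.
\end{enumerate}
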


\begin{rema}
 A Schottky group of rank $g$ may also be characterized as a finitely generated, purely loxodromic  Kleinian group with a non-empty region of discontinuity, which is isomorphic to a free group of rank $g$ \cite{Maskit:Schottky} (equivalently, a Schottky group of rank $g$ is a purely loxodromic geometrically finite Kleinian group isomorphic to free groups of rank $g$). 
\end{rema}

\begin{theo}[\cite{Bers:Schottky}, \cite{Chuckrow}]
\mbox{}
\begin{enumerate}[leftmargin=15pt]
\item Let $\Gamma$ be a Schottky group of rank $g$. If $\Gamma=\langle B_{1},\ldots, B_{g}\rangle$, then there is a Schoottky admissible configuration such that $\{B_{1},\ldots,B_{g}\}$ is a Schottky set of generators of $\Gamma$  associated to it.

\item If $\Gamma_{1}$ and $\Gamma_{2}$ are two Schottky groups of the same rank $g$, then there is a quasiconformal homeomorphism $\varphi:\widehat{\mathbb C} \to \widehat{\mathbb C}$ such that $\Gamma_{2}=\varphi \Gamma_{1} \varphi^{-1}$.
\end{enumerate}
\end{theo}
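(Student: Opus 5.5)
Part (2) reduces to part (1) once we know every Schottky group of rank $g$ comes from a standard geometric picture; so I would prove (1) first and deduce (2) from it.

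\emph{Part (1).} The approach is topological, working on the quotient surface. Let $\Gamma$ have a Schottky configuration ${\mathfrak C}_{g}$ with generators $A_{1},\ldots,A_{g}$. By the previous theorem, $\Omega(\Gamma)$ is connected, $S=\Omega(\Gamma)/\Gamma$ is a closed surface of genus $g$, and $P:\Omega(\Gamma)\to S$ is the covering. Its deck group is $\Gamma$, a free group of rank $g$.

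\begin{enumerate}[leftmargin=15pt]
\item The loops $C_{i}$ project to disjoint simple loops $\alpha_{1},\ldots,\alpha_{g}$ on $S$ whose complement is connected and planar. Hence $P$ is the covering corresponding to the normal closure $N$ of $\alpha_{1},\ldots,\alpha_{g}$ in $\pi_{1}(S)$, and $\Gamma\cong\pi_{1}(S)/N$.
\item A new free basis $B_{1},\ldots,B_{g}$ of $\Gamma$ differs from $A_{1},\ldots,A_{g}$ by an automorphism of the free group $F_{g}$. By Nielsen, ${\rm Aut}(F_{g})$ is generated by elementary moves: permutations, inversions, and $A_{1}\mapsto A_{1}A_{2}$.
\item Each Nielsen move can be realized by a homeomorphism of the handlebody $M_{\Gamma}$, in particular by a homeomorphism $h$ of $S$ preserving $N$. (This is Zieschang's classical realization result.) Inversions and permutations are trivial; $A_{1}\mapsto A_{1}A_{2}$ is a handle slide.
\item Lifting $h$ gives a homeomorphism $\tilde h$ of $\Omega(\Gamma)$ that conjugates the $\Gamma$-action by the given automorphism. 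The images $\tilde h(C_{i}),\tilde h(C'_{i})$ are then loops in $\Omega(\Gamma)$, and the $B_{i}$ pair them as required.
\item These $2g$ loops bound a common domain of connectivity $2g$, so (S2) holds. (S1) and (S3) are automatic for finite families, so this is a Schottky admissible configuration for $B_{1},\ldots,B_{g}$.
\end{enumerate}

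The main obstacle is step (3), the handlebody realization of Nielsen moves. I would either cite Zieschang, or give the explicit handle slide: take an arc in the planar region joining $C_{1}$ to $C_{2}'$, and band-sum $C_{1}$ with a parallel copy of $C_{2}$ along it.

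\emph{Part (2).} Fix Schottky generators for $\Gamma_{1}$ and $\Gamma_{2}$ with fundamental domains $D^{1}$ and $D^{2}$, each of connectivity $2g$. There is an orientation-preserving quasiconformal (even piecewise smooth) homeomorphism $\varphi_{0}:\overline{D^{1}}\to\overline{D^{2}}$ sending $C_{i}^{1}\mapsto C_{i}^{2}$ and $C_{i}'^{1}\mapsto C_{i}'^{2}$. We choose it so that on each $C_{i}^{1}$ it satisfies the compatibility $\varphi_{0}\circ A_{i}^{1}=A_{i}^{2}\circ\varphi_{0}$. To arrange this, first define $\varphi_{0}$ on $C_{i}^{1}$ arbitrarily (as a smooth map), then define it on $C_{i}'^{1}$ by the compatibility relation. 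Finally, extend it to the planar domain with boundary-values prescribed; the extension is quasiconformal because the boundary map is a quasisymmetric map of quasicircles.

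Extend $\varphi_{0}$ equivariantly, $\varphi(T(z))=\rho(T)(\varphi_{0}(z))$, where $\rho:\Gamma_{1}\to\Gamma_{2}$ is the isomorphism $A_{i}^{1}\mapsto A_{i}^{2}$. This gives a homeomorphism $\Omega(\Gamma_{1})\to\Omega(\Gamma_{2})$. Its dilatation is bounded by that of $\varphi_{0}$, since Möbius maps are conformal.

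The remaining delicate step is extending $\varphi$ across the limit sets. The translates of $D^{i}$ by words of length $n$ nest down onto the Cantor sets, and the nested discs $T({\rm Int}(C))$ correspond under $\rho$. So $\varphi$ extends to a homeomorphism of $\widehat{\mathbb C}$ by matching nested sequences; this needs the discs to shrink to points, which holds since the limit set is totally disconnected. A limit set of a Schottky group has measure zero, and a homeomorphism that is $K$-quasiconformal off a closed set of measure zero… here I would instead use removability for quasiconformal maps of such Cantor sets. Alternatively, I would invoke the Ahlfors–Bers measurable Riemann mapping theorem: solve a Beltrami equation for a $\Gamma_{1}$-invariant coefficient, sidestepping the removability question. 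The result is a quasiconformal $\varphi$ with $\Gamma_{2}=\varphi\Gamma_{1}\varphi^{-1}$.
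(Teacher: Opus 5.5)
The paper does not prove this theorem; it only cites Bers and Chuckrow, so I compare your proposal with the standard arguments.

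\textbf{Part (1).} This is sound. It is the classical Nielsen-move argument, with the curve surgery replaced by the fact that every automorphism of $\pi_{1}$ of a handlebody is induced by a homeomorphism. Three small additions:
\begin{enumerate}[leftmargin=15pt]
\item Any $g$ elements generating a free group of rank $g$ form a free basis (Hopfian property).
\item The side condition $B_{i}({\rm Ext}(\tilde h(C_{i})))={\rm Int}(\tilde h(C'_{i}))$ needs a line. For example, note that $\tilde h(D_{g})$ and $\tilde h(A_{i}(D_{g}))$ lie locally on opposite sides of $\tilde h(C'_{i})$.
\item In your explicit handle slide, the band sum should be of $C_{1}$ with $C'_{2}$ (the arc ends there), and the $A_{2}$-pair must also change. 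If $E$ is that band sum, then $(A_{2}^{-1}(E),E)$ is the new pair for $A_{2}$, and $(A_{2}^{-1}(C_{1}),C'_{1})$ is the pair for $A_{1}A_{2}$.
\end{enumerate}

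\textbf{Part (2): the gap.} The gap is exactly at the step you flag, and neither of your remedies closes it.
\begin{enumerate}[leftmargin=15pt]
\item Area zero of $\Lambda(\Gamma_{1})$ does not imply removability, since there are non-removable compact sets of area zero. Total disconnectedness gives no quasiconformal control by itself either. ``Removability of such Cantor sets'' is true for Schottky limit sets, but it is precisely the statement that has to be proved.
\item The Ahlfors--Bers route does not sidestep the issue. Solving the Beltrami equation for $\mu=\mu_{\varphi}$ on $\Omega(\Gamma_{1})$, $\mu=0$ on $\Lambda(\Gamma_{1})$, gives $w^{\mu}$ and a Schottky group $\Gamma'=w^{\mu}\Gamma_{1}(w^{\mu})^{-1}$, not $\Gamma_{2}$. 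To identify $\Gamma'$ with $\Gamma_{2}$ you must show that $\psi=\varphi\circ(w^{\mu})^{-1}$ is M\"obius. But $\psi$ is an equivariant homeomorphism of $\widehat{\mathbb C}$ that is conformal off $\Lambda(\Gamma')$, so this is the same removability question (equivalently, uniqueness of the Schottky uniformization of a marked surface).
\end{enumerate}

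\textbf{The missing idea.} You need to use the group to control $\varphi$ at limit points.
\begin{enumerate}[leftmargin=15pt]
\item Put $\infty$ in the interiors of $D^{1}$ and $D^{2}$. For a reduced word $T=X_{1}\cdots X_{n}$, the pole $T^{-1}(\infty)$ lies in the first-level disc attached to $X_{n}^{-1}$.
\item Hence $T$, and likewise $\rho(T)$, has uniformly bounded distortion on fixed neighbourhoods of the other first-level discs.
\item Given $x\in\Lambda(\Gamma_{1})$ and small $r$, choose $n$ along the itinerary of $x$ so that $T_{n}^{-1}$ rescales $B(x,r)$ to a disc of size in a fixed range.
\item Since $\varphi=\rho(T_{n})\circ\varphi\circ T_{n}^{-1}$ there, uniform continuity of $\varphi^{\pm1}$ gives a bound $H_{\varphi}(x,r)\le H_{0}$ independent of $x$ and $r$.
\item Because $\varphi$ is $K$-quasiconformal on $\Omega(\Gamma_{1})$, its linear dilatation is bounded everywhere. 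Gehring's metric characterization then shows that $\varphi$ is quasiconformal on $\widehat{\mathbb C}$; no measure-zero input is needed.
\end{enumerate}
Alternatively, invoke a standard result such as Marden's isomorphism theorem for geometrically finite groups.

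Separately, the $C_{i}$ are only Jordan curves, not necessarily quasicircles. So before building $\varphi_{0}$ with prescribed smooth boundary values, first replace them by smooth loops, for instance lifts of geodesic representatives on the quotient surface.
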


\begin{theo}[The retrosection theorem]
Let $\Sigma$ be a closed Riemann surface of genus $g \geq 1$, and ${\mathcal F}$ be a Schottky system of loops on $\Sigma$. 
Then there is a Schottky group $\Gamma$ of rank $g$ together an admissible Schottky admissible configuration ${\mathfrak C}$ for it, and a regular holomorphic covering $P:\Omega(\Gamma) \to \Sigma$, with Deck group $\Gamma$, such that ${\mathcal F}$ is a subcollection of $P({\mathfrak C})$.
\end{theo}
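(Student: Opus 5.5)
The plan is to build the Schottky covering as the regular covering of $\Sigma$ associated to the normal subgroup generated by the loops of ${\mathcal F}$. Then we show it is planar, use Koebe's planarity theorem to embed it in the sphere, and identify its deck group as a Schottky group.

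First, fix a sub-collection ${\mathcal H}=\{\alpha_{1},\ldots,\alpha_{g}\}\subset {\mathcal F}$ such that $\Sigma\setminus{\mathcal H}$ is connected and planar. Such a collection exists because cutting $\Sigma$ along ${\mathcal F}$ gives planar pieces, and the dual graph of this decomposition has first Betti number $g$; we keep one loop for each edge outside a spanning tree.

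Let $N\lhd\pi_{1}(\Sigma)$ be the normal closure of the classes of the loops in ${\mathcal F}$, and let $P:\widetilde{\Omega}\to\Sigma$ be the associated regular covering, with deck group $\pi_{1}(\Sigma)/N$.

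\begin{enumerate}[leftmargin=15pt]
\item \emph{The deck group is free of rank $g$.} Since each $\alpha\in{\mathcal F}\setminus{\mathcal H}$ bounds together with loops of ${\mathcal H}$ in the planar surface $\Sigma\setminus{\mathcal H}$, the group $N$ equals the normal closure of $\alpha_{1},\ldots,\alpha_{g}$. The quotient by $N$ is the fundamental group of the handlebody obtained by attaching discs along the $\alpha_{i}$, which is free of rank $g$.
\item \emph{$\widetilde{\Omega}$ is planar.} Each lift of a loop of ${\mathcal F}$ is a simple closed loop, because the loop itself lies in $N$. We can then describe $\widetilde{\Omega}$ as a tree-like union of copies of the planar domain $\Sigma\setminus{\mathcal H}$, glued along lifts of the $\alpha_{i}$ following the Cayley tree of the free group. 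Every simple loop in $\widetilde{\Omega}$ therefore separates, so $\widetilde{\Omega}$ is planar.
\item \emph{Uniformization in the sphere.} By Koebe's theorem there is a conformal embedding $\widetilde{\Omega}\hookrightarrow\widehat{\mathbb C}$. We may choose the embedding so that every deck transformation extends to a M\"obius transformation; for instance, take Koebe's embedding onto a domain whose boundary components are points or circles, where Koebe uniqueness forces the automorphisms to be M\"obius. Call the resulting group $\Gamma$.
\item \emph{Identifying $\Gamma$ as a Schottky group.} Fix one copy $D$ of the lift of $\Sigma\setminus{\mathcal H}$; its closure is bounded by $2g$ loops $C_{i},C_{i}'$, which are lifts of the $\alpha_{i}$. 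Choose $A_{i}\in\Gamma$ carrying $C_{i}$ to $C_{i}'$. The tree structure of the decomposition shows that $A_{i}({\rm Ext}(C_{i}))={\rm Int}(C_{i}')$ and that ${\mathfrak C}=\{C_{i},C'_{i}\}$ is a finite, hence Schottky, admissible configuration. So $\Gamma=\langle A_{1},\ldots,A_{g}\rangle$ is a Schottky group of rank $g$.
\item \emph{Conclusion.} By the theorem of Maskit quoted above, $D$ is a fundamental domain for $\Gamma$ and $\Omega(\Gamma)/\Gamma$ is closed of genus $g$. Hence the image of $\widetilde{\Omega}$ is exactly $\Omega(\Gamma)$, and ${\mathcal H}\subset{\mathcal F}$ is contained in $P({\mathfrak C})$. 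If the statement requires all of ${\mathcal F}$, choose a different ${\mathcal H}$ or take ${\mathfrak C}$ to include lifts of all loops of ${\mathcal F}$ bounding $D$.
\end{enumerate}

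The main obstacle is item 3: ensuring that the deck group acts by M\"obius transformations, and not merely by conformal automorphisms of some planar domain. I would first try the Koebe circle-domain uniqueness argument. If that fails, I would use the alternative route of quasiconformal deformation: start from any Schottky group of rank $g$, use that all such groups are quasiconformally conjugate (Bers--Chuckrow, quoted above) to get the correct topology, and then solve the Beltrami equation with a $\Gamma$-invariant coefficient that realizes the conformal structure of $\Sigma$ via the measurable Riemann mapping theorem.
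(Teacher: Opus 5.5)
Your plan follows the paper's construction of $(\Sigma,{\mathcal F})$-Schottky groups: take the normal closure $N$ of the loops, observe that the cover is planar, realize it in $\widehat{\mathbb C}$ with M\"obius deck group, and read off ${\mathfrak C}$ from the lifts of ${\mathcal H}$. For the closed case the paper gives no argument and only cites Koebe and Bers; your fallback is Bers' proof. Steps 1, 2 and 4 are fine. The gap is the justification you give for step 3.

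For $g\geq 2$ the ends of $\widetilde{\Omega}$ correspond to the ends of the Cayley tree of the free group, so there are uncountably many. Each end is cut off from $D$ by infinitely many disjoint lifted collars of the $\alpha_{i}$, with moduli bounded below. Hence \emph{every} conformal embedding of $\widetilde{\Omega}$ already has totally disconnected complement $E$, and passing to a ``circle domain'' changes nothing. ``Koebe uniqueness'' would then be exactly the statement that $E$ is removable for conformal homeomorphisms. Rigidity of circle domains is known only under extra hypotheses (countably many boundary components, or boundary of $\sigma$-finite length) and fails in general, e.g.\ for a complement of positive area. It does hold here a posteriori, but proving it needs the same quasiconformal input as Bers' argument. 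So as written, step 3 assumes the real content of the theorem.

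To close the gap, either quote a linearization theorem for automorphism groups of planar surfaces (the result of Haas that the paper uses for the infinite-type construction), or run Bers' argument, which makes steps 2--3 unnecessary:
\begin{enumerate}
\item Take any rank-$g$ Schottky group $\Gamma_{0}$ with configuration ${\mathfrak C}_{0}$ and covering $P_{0}$.
\item Choose a quasiconformal homeomorphism $f:\Omega(\Gamma_{0})/\Gamma_{0}\to\Sigma$ with $f(P_{0}({\mathfrak C}_{0}))={\mathcal H}$. It exists because both surfaces, cut along these $g$ loops, are spheres with $2g$ holes glued in the same pattern.
\item Lift the Beltrami coefficient of $f$ to a $\Gamma_{0}$-invariant $\mu$ on $\Omega(\Gamma_{0})$, set $\mu=0$ on $\Lambda(\Gamma_{0})$, and let $w$ solve the Beltrami equation.
\item Then $\Gamma=w\Gamma_{0}w^{-1}$ is Schottky with configuration ${\mathfrak C}=w({\mathfrak C}_{0})$, and $P=f\circ P_{0}\circ w^{-1}$ is a holomorphic regular covering with $P({\mathfrak C})={\mathcal H}$.
\item By your step 1, every loop of ${\mathcal F}$ then lifts to simple loops.
\end{enumerate}
It is this choice of $f$, not the Bers--Chuckrow conjugacy theorem, that gives ``the correct topology''.

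On step 5: read literally, ${\mathcal F}\subset P({\mathfrak C})$ can only hold when $\Sigma\setminus{\mathcal F}$ is connected. Indeed $P({\mathfrak C})$ has exactly $g$ loops, while an Euler characteristic count gives $|{\mathcal F}|=g-1+m$, where $m$ is the number of components of $\Sigma\setminus{\mathcal F}$. The provable statement, and the one the paper uses in its subsection on Schottky systems of loops, is $P({\mathfrak C})={\mathcal H}\subset{\mathcal F}$ with all of ${\mathcal F}$ lifting to simple loops; your steps 1--2 give exactly this. Your proposed repairs do not work:
\begin{enumerate}
\item Every admissible ${\mathcal H}$ has exactly $g$ loops, so choosing a different ${\mathcal H}$ cannot help.
\item A lift in $D$ of a loop $\beta\in{\mathcal F}\setminus{\mathcal H}$ splits the loops $C_{i},C'_{i}$ into two groups of at least two each, because $\beta$ is essential and not parallel to any $\alpha_{i}$. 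Adding it to ${\mathfrak C}$ therefore violates (S2).
\end{enumerate}
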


The retrosection theorem was first stated by Felix Klein in 1883 \cite{Klein} and proved rigorously by Koebe \cite{Koebe}. In \cite{Bers:Schottky}, Bers provided a proof by use of quasiconformal mappings.

\begin{rema}
Let $\Sigma$ be a closed Riemann surface of genus $g$. An {\it uniformization} of $\Sigma$ is a regular planar covering $Q:\Omega \to \Sigma$, with Deck group a Kleinian group $\Gamma$, where $\Omega$ is a $\Gamma$-invariant connected component of the region of discontinuity of $\Gamma$. In the case that $\Gamma$ is a Schottky group (in which case $\Omega=\Omega(\Gamma)$), we talk of a  {\it Schottky uniformization}. The retrocestion theorem states that $\Sigma$ admits Schottky uniformizations. The Schottky uniformizations of $\Sigma$ correspond to the smallest regular planar coverings of it for which a given Schottky system of loops on $\Sigma$ lifts to loops.
\end{rema}

\begin{rema}[Classical Schottky groups]
\mbox{}
\begin{enumerate}[leftmargin=15pt]
\item To check if a Schottky group $\Gamma$ is classical, one needs to find a Schottky admissible configuration of circles. A classical Schottky group admits Schottky admissible configurations that are not formed by circles.
In \cite{Marden}, Marden showed that, for every $g\geq 2$, there are non-classical Schottky groups of rank $g$; see also \cite{J-M-M:schottky}. An explicit family of examples of non-classical Schottky groups of rank two was constructed by Yamamoto in \cite{Yamamoto}, and a theoretical construction of an infinite collection of non-classical Schottky groups is provided in \cite{HM:Schottky}.

\item As there are non-classical Schottky groups, one may wonder if every closed Riemann surface $\Sigma$ of genus $g$ can or cannot be uniformized by a classical Schottky group of rank $g$. 
An affirmative answer was known in either one of the following two situations: (i) $\Sigma$ admits an anticonformal automorphism of order two with fixed points (Koebe), and (ii) $\Sigma$ has $g$ pairwise disjoint homologically independent short loops (McMullen). 
\end{enumerate}
\end{rema}

%%%%%%%%%%%%%%%%%%
\subsection{Schottky groups and Schottky system of loops}
Let $\Sigma$ be a closed Riemann surface of genus $g$.

By the retrosection theorem, there is a Schottky group $\Gamma$ of rank $g$ and a biholomorphism $\varphi:\Sigma \to \Omega(\Gamma)/\Gamma$. Let ${\mathfrak C}=\{C_{1},\ldots,C_{g}, C'_{1},\ldots,C'_{g}\}$ be a Schottky admissible configuration for $\Gamma$. The loops $C_{i}$'s project to a collection ${\mathcal G}=\{\alpha_{1},\ldots,\alpha_{g}\}$ of simple loops on $\Omega(\Gamma)/\Gamma$ such that $\Omega(\Gamma)/\Gamma \setminus {\mathcal G}$ is a connected planar domain of connectivity $2g$. So, ${\mathcal F}=\{\varphi^{-1}(\alpha),\ldots,\varphi^{-1}(\alpha_{g})\}$ is a Schottky system of loops on $\Sigma$.

Conversely, let ${\mathcal F}$ be any Schottky system of loops on $\Sigma$.
Inside ${\mathcal F}$ there is a sub-collection ${\mathcal H}$ which is also a Schottky system of loops on $\Sigma$ such that $\Sigma \setminus {\mathcal H}$ is a connected planar domain of connectivity $2g$. The collection ${\mathcal H}$ defines a Schottky group $\Gamma$ of rank $g$ together with a Schottky admissible configuration ${\mathfrak C}$ such that it reverses the above process. Moreover, every loop in ${\mathcal F}$ lifts to $\Omega(\Gamma)$ as a collection of simple loops.

%%%%%%%%%%%%%%%%%%%%
\subsection{A lifting/extension problem}
Let $\Sigma$ a closed Riemann surface of genus $g \geq 2$ and $G \leq {\rm Aut}(\Sigma)$ be a (necessarily finite) group of conformal automorphisms. 

Let $\Gamma$ be a Schottky group such that there is a Galois covering $P:\Omega(\Gamma) \to \Sigma$, with ${\rm Deck}(P)=\Gamma$ (i.e., a Schottky uniformization of $R$). We say that $G$ lifts under $P$ if for every $f \in G$ there is some $\widehat{f} \in {\rm PSL}_{2}({\mathbb C})$ such that $\widehat{f} \circ P=P \circ f$.

\medskip

The condition for $G$ to lift to some Schottky uniformization of $\Sigma$ is equivalent to saying that the group $G$ extends continuously as a (isomorphic) group of homeomorphisms of some handlebody $M$ with boundary $\Sigma$.

\medskip

As a consequence of the equivariant loop theorem \cite{M-Y} (see \cite{H-M} for proof of this fact in terms of Kleinian groups), the following hold.

\begin{theo}
Let $\Sigma$ be a closed Riemann surface of genus $g \geq 2$ and $G<{\rm Aut}(\Sigma)$.
Then there is a Schottky uniformization of $\Sigma$ for which the group $G$ lifts (equivalently, extends as a group of homeomorphisms to some handlebody) if and only if 
there is a $G$-invariant Schottky system of loops on $\Sigma$.
\end{theo}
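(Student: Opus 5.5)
The statement has two directions, and the easy one is ($\Leftarrow$). Suppose ${\mathcal F}$ is a $G$-invariant Schottky system of loops on $\Sigma$. Take the Schottky uniformization $P:\Omega(\Gamma)\to\Sigma$ given by the retrosection theorem for ${\mathcal F}$. This is the smallest regular planar covering in which every loop of ${\mathcal F}$ lifts to a loop, so its covering group is the normal subgroup $N\lhd\pi_{1}(\Sigma)$ normally generated by the classes of the loops in ${\mathcal F}$. Each $f\in G$ permutes the loops of ${\mathcal F}$, so $f_{*}$ preserves $N$ up to the choice of base point. Standard covering theory then lifts $f$ to a homeomorphism $\widehat f:\Omega(\Gamma)\to\Omega(\Gamma)$ with $P\circ\widehat f=f\circ P$. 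This lift is conformal and normalizes $\Gamma$. Since $\Lambda(\Gamma)$ is a Cantor set, it is removable for conformal homeomorphisms, so $\widehat f$ extends to a M\"obius transformation.

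For ($\Rightarrow$), suppose $G$ lifts to some Schottky uniformization $P:\Omega(\Gamma)\to\Sigma$. Let $\widehat G$ be the group generated by all the lifts. It is a Kleinian group containing $\Gamma$ as a normal subgroup of finite index, with $\widehat G/\Gamma\cong G$ and $\Omega(\widehat G)=\Omega(\Gamma)$. Since $\Gamma$ is a finite-index subgroup, $\widehat G$ is geometrically finite and virtually free. It therefore acts on ${\mathbb H}^{3}\cup\Omega(\Gamma)$, and the induced action of $G$ on the handlebody $M_{\Gamma}$ is by homeomorphisms that extend the action on $\Sigma=\partial M_{\Gamma}$.

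The plan is to produce $G$-invariant disjoint compressing discs. Here I will invoke the equivariant loop (Dehn) theorem of Meeks--Yau \cite{M-Y}. After choosing a $G$-invariant Riemannian metric on $M_{\Gamma}$ with mean-convex boundary (for instance a slight modification of the hyperbolic metric near the conformal boundary), least-area compressing discs are embedded. Any two such discs are either disjoint or equal, and their images under $G$ are again least-area. Iterating on the complement produces a maximal $G$-invariant family of pairwise disjoint, non-parallel essential discs whose complement in $M_{\Gamma}$ consists of balls. The boundary curves of these discs form the desired $G$-invariant Schottky system ${\mathcal F}$: each complementary region of $\Sigma\setminus{\mathcal F}$ is the boundary piece of a ball cut by discs, hence a finite planar surface.

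The main obstacle is the equivariance and disjointness step, showing that the $G$-translates of minimal discs do not cross one another. This is exactly the content of Meeks--Yau. As an alternative I would follow the Kleinian-group argument of \cite{H-M}, which uses the structure theorem for virtually free geometrically finite function groups. By Maskit's combination theorems, $\widehat G$ is built from finite groups by free products and HNN extensions along trivial or cyclic subgroups, realized by $\widehat G$-invariant families of loops in $\Omega(\Gamma)$. The projections of these loops to $\Sigma$ give the $G$-invariant family, and then one checks that the family is nonparallel and that its complementary pieces are planar.
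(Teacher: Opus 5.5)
Your proposal is correct and follows the paper's route: the paper gives no argument beyond invoking the equivariant loop theorem of Meeks--Yau (with Hidalgo--Maskit as the Kleinian-group alternative) for the nontrivial direction, and treats the converse as the covering-space lifting you describe. One small fix: being a Cantor set does not by itself make $\Lambda(\Gamma)$ removable for conformal homeomorphisms, so justify that $\widehat f$ is M\"obius differently: total disconnectedness lets $\widehat f$ extend to a homeomorphism of $\widehat{\mathbb C}$, and $\Lambda(\Gamma)$ lies on a quasicircle (since $\Gamma$ is quasiconformally conjugate to a Fuchsian Schottky group, whose limit set lies in $\widehat{\mathbb R}$), and quasicircles are conformally removable.
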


The following is known (see, for instance, \cite{H1, H2,H4, RZ}).
\begin{enumerate}[leftmargin=15pt]
\item[(i)] If $\Sigma/G$ has genus zero and exactly three cone points, then it cannot be lifted to a Schottky uniformization. 
\item[(ii)] If $G$ acts freely (i.e., the $G$-stabilizer of every point of $\Sigma$ is trivial) and $\Sigma/G$ has genus one (so, $\Sigma$ also has genus one), then it can be lifted to a Schottky uniformization.
\item[(iii)] If $G$ acts freely and it is isomorphic to either an abelian group or one of the Platonic symmetry groups, then it can be lifted to a Schottky uniformization.
\item[(iv)] If $G$ is a dihedral group, then it can be lifted to a Schottky uniformization.
\end{enumerate}

\begin{rema}
Let $\Sigma$ be a closed orientable surface of genus $g$ (we do not need to assume it to be a Riemann surface).
Assume $G \leq {\rm Hom}^{+}(\Sigma)$ is finite and acting freely on $\Sigma$. 
The surface $\Sigma$ can be seen as the boundary of infinitely many compact orientable $3$-manifolds (for instance, handlebodies). 
As it is well known that  one can provide to $\Sigma$ of a Riemann surface structure such that $G$ acts a group of conformal automorphisms, 
the above theorem states a necessary and sufficient condition for $G$ to extend to a handlebody. 
In \cite{Samperton}, Samperton proved that if $B_{0}(G)=0$ (where $B(G)$ denotes the Bogomolov multiplier of $G$), then $G$ extends to some $3$-manifold $M$ (not necessarily a handlebody).
Examples of these types of groups are the abelian and dihedral groups (they extend to handlebodies). Other examples are the alternating and symmetric groups (see also \cite{DS}).
In the same paper, Samperton proved that if $B_{0}(G) \neq 0$ and $G$ does not contain a dihedral subgroup, then it admits a free action (on some surface) that does not extend to any $3$-manifold (for instance, $G={\rm SmallGroup}(3^{5},28)$ in the Gap Library \cite{GAP}). In \cite{H:extension}, it is proved the existence of finite groups $G$ acting freely as orientation-preserving homeomorphisms on closed orientable surfaces which extend as a group of homeomorphisms of some compact orientable $3$-manifold but which cannot extend to a handlebody. 
\end{rema}

%%%%%%%%%%%%%%%%
%%%%%%%%%%%%%%%%
\section{Infinitely generated Schottky groups}\label{Sec:Schottkyinfinito}
Above, we have recalled the notion of Schottky groups of finite rank. The natural question is how to define the notion of a Schottky group of infinite rank.
In \cite{BM}, Basmajian and Matsuzaki proposed a definition of weakly-Schottky groups and Schottky groups associated to admissible configurations. 
Below, we recall their definitions and some of the properties of these groups. We will name them as infinitely generated weakly-Schottky/Schottky groups.

%%%%%%%%%%%%%%%%%
\subsection{Infinitely generated Schottky-like groups}
Let ${\mathfrak C}=\{C_{i},C'_{i}\}_{i \in {\mathbb N}}$ be an admissible configuration. If 
$\{A_{i}\}_{i \in {\mathbb N}}$ is a collection of loxodromic elements such that $A_{i}({\rm Ext}(C_{i})) = {\rm Int}(C'_{i})$, then we call the 
group  $\Gamma=\langle A_{i}: i \in {\mathbb N}\rangle < {\rm PSL}_{2}({\mathbb C})$ an {\it infinitely generated Schottky-like group}. 
In the case that every element of ${\mathfrak C}$ is a circle, then we say that $\Gamma$ is a {\it classical} infinitely generated Schottky-like group. Let us denote by 
$\Omega(\Gamma)$ and $\Lambda(\Gamma)$ be the region of discontinuity and the limit set, respectively, of the group $\Gamma$.

\begin{rema}
In \cite{BM}, the authors require $\Gamma$ to be discrete. But, as ${\rm Ext}({\mathfrak C})^{0} \neq \emptyset$, discreteness always hold. \end{rema}

\begin{theo}[\cite{BM}]\label{teo2}
Let $\Gamma$ be an infinitely generated Schottky-like group defined by the admissible configuration ${\mathfrak C}$. Then
\begin{enumerate}[leftmargin=15pt]
\item $\Gamma$ is purely loxodromic and free on a countable number of generators.
\item ${\rm Ext}({\mathfrak C})$ is precisely invariant under the identity in $\Gamma$.
\item ${\rm Ext}({\mathfrak C})^{0} \cup {\mathfrak C} \subset \Omega(\Gamma)$, in particular, $\Gamma$ is discrete with $\Omega(\Gamma) \neq \emptyset$. 
\item If ${\mathfrak C}$ consists solely of circles (i.e., $\Gamma$ is classical), then the accumulation points of the configuration of circles form a closed subset of the limit set $\Lambda(\Gamma)$.
\end{enumerate}
\end{theo}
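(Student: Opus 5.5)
The plan is a ping-pong (Klein combination) argument using the region $D={\rm Ext}({\mathfrak C})$, with one extra ingredient for the infinite generating set: condition (S3), together with local finiteness of the configuration around each loop. I will prove (2) first, deduce (1) and (3) from it, and finally treat (4).

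For (2), take a nontrivial reduced word $W=A_{i_1}^{\epsilon_1}\cdots A_{i_n}^{\epsilon_n}$ in the generators, with each $\epsilon_k=\pm1$. Since $A_i({\rm Ext}(C_i))={\rm Int}(C'_i)$, we have $A_i^{-1}({\rm Ext}(C'_i))={\rm Int}(C_i)$. Set $I(A_i)={\rm Int}(C'_i)$ and $I(A_i^{-1})={\rm Int}(C_i)$. By (S2) these open discs are pairwise disjoint, and each one lies in ${\rm Ext}(C)$ for every other loop $C$. Moreover, every generator $A_j^{\pm1}$ maps the complement of the closed disc attached to its own inverse into its own disc. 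Induction on the word length then shows that $W(\overline{D})$ is contained in $\overline{I(A_{i_1}^{\epsilon_1})}$. The base of the induction is that $D\subset {\rm Ext}(C_i)\cap{\rm Ext}(C'_i)$ for every $i$. The inductive step uses reducedness: it guarantees that the disc attained so far lies in ${\rm Ext}$ of the loop attached to the inverse of the next letter. Since $W(D)\subset{\rm Int}(C'_{i_1})$ or ${\rm Int}(C_{i_1})$, and both are disjoint from $D$, we get $W(D)\cap D=\emptyset$. This gives precise invariance of ${\rm Ext}({\mathfrak C})$ under the identity, with a little care at boundary points using the closed versions.

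Part (1) follows from the same computation. No reduced word is the identity, so $\Gamma$ is free on $\{A_i\}$, a countable set. Because ${\rm Ext}({\mathfrak C})^0\neq\emptyset$ is moved off itself by every nontrivial element, no nontrivial element is elliptic of finite order. Any parabolic or elliptic element would be conjugate to a reduced word, so it suffices to show that every cyclically reduced word is loxodromic. For a cyclically reduced $W$, the nested-disc argument gives $W(\overline{{\rm Ext}(J)})\subset I$, where $I$ is the first letter's disc and $J$ is the disc attached to the inverse of the last letter, with $I\cap J=\emptyset$. This strict contraction of a closed disc into a disjoint open disc forces a fixed point in the interior of $I$ that is attracting, so $W$ is loxodromic. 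Discreteness also follows, since no sequence of distinct elements can converge to the identity when all of them move a fixed open set off itself.

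For (3), take $z\in{\rm Ext}({\mathfrak C})^0$. A small disc around $z$ inside $D$ is moved off itself by every nontrivial element, so $z\in\Omega(\Gamma)$. For $z\in C\in{\mathfrak C}$, say $C=C_i$, I would use Remark \ref{obs1}(3), which gives an open set $U_C$ meeting no other loop. Then the neighborhood $V=U_C\cap({\rm Ext}({\mathfrak C})^0\cup C\cup A_i^{-1}(\ldots))$, built from $D\cup A_i^{-1}(D)$ near $C$, is moved off itself by every element except possibly $A_i^{\pm1}$, which is finitely many. This is the main obstacle. One has to check that $U_C\cap\overline{D}$ contains no accumulation points of ${\mathfrak C}$ (that is exactly (S3)), so that $D\cup C\cup A_i^{-1}(D)$ really contains a neighborhood of $z$. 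Once that holds, the ping-pong argument shows its translates are disjoint except under $A_i^{\pm1}$.

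Finally, (4) holds in the classical case. Let $p\in{\mathfrak C}'$ be a limit of points on distinct circles $E_j$. I would show that the paired images shrink to $p$. Each disc ${\rm Int}(E_j)$ contains a fixed point of a loxodromic element, for instance the attracting fixed point of $A_i$ inside ${\rm Int}(C'_i)$. For circles, the radii of $E_j$ tend to zero because disjoint round discs accumulating at a point must shrink (area argument). So these fixed points converge to $p$, and $p\in\Lambda(\Gamma)$. Closedness is immediate from the definition of ${\mathfrak C}'$.
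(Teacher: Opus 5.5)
Your proposal is correct. It is essentially the argument the paper relies on: the paper proves nothing here itself, citing \cite[Proposition 2.1]{BM} for (1)--(2) and \cite[Lemma 2.2]{BM} for (4), and remarking only that (3) follows from (S3). Your ping-pong on ${\rm Ext}({\mathfrak C})$, your neighbourhood ${\rm Ext}({\mathfrak C})^{0}\cup C_{i}\cup A_{i}^{-1}({\rm Ext}({\mathfrak C})^{0})$ of a point $z\in C_{i}$ (which uses (S3) at both $z$ and $A_{i}(z)$), and your area argument that round configurations automatically satisfy (S1) (Remark~\ref{obs1}(2)) are exactly the standard ingredients behind those statements.

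The one step you only assert is the passage in (1) from ``$W$ maps the closed disc $K=\widehat{\mathbb C}\setminus J$ into its interior'' to ``$W$ is loxodromic''. To close it, apply Brouwer to $W$ on $K$ and to $W^{-1}$ on $\widehat{\mathbb C}\setminus {\rm int}(K)$; this gives two distinct fixed points, which rules out parabolics. Then note that an elliptic element preserves a smooth area form, so it cannot map $K$ into its own interior.
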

\begin{proof}
Parts (1) and (2) correspond to parts (1) and (2) on \cite[Proposition 2.1.]{BM}.
Part (3) follows from condition (S3). Part (4) is \cite[Lemma 2.2.]{BM}.
\end{proof}

\begin{rema}
If $\Gamma$ is an infinitely generated Schottky-like group defined by the admissible configuration ${\mathfrak C}$, then the following might happen.
\begin{enumerate}[leftmargin=15pt]
\item $\Lambda(\Gamma)$ might not be totally disconnected.
\item $\Omega(\Gamma)$ might not be connected, and it might not contain a $\Gamma$-invariant connected component.
\item There might be points in ${\mathfrak C}'$ that are in $\Omega(\Lambda)$ and others in $\Lambda(\Gamma)$.

\item ${\rm Ext}({\mathfrak C})^{0}$ might not be a fundamental domain for $\Gamma$. There might be points in ${\mathfrak C}'={\rm Ext}({\mathfrak C}) \setminus {\rm Ext}({\mathfrak C})^{0}$ belonging to $\Omega(\Gamma)$.
\end{enumerate}
\end{rema}

%%%%%%%%%%%%%%%%
\subsection{Infinitely generated Schottky groups}
Follwing \cite{BM}, if $\Gamma$ is an infinitely generated Schottky-like group, with associated admissible configuration ${\mathfrak C}$, such that (i) $\Lambda(\Gamma)$ is totally disconnected and (ii) ${\rm Ext}({\mathfrak C})^{0}$ is a fundamental domain for $\Gamma$,  then $\Gamma$ is called a {\it infinitely generated Schottky group}.

\begin{rema}[VIII.A. in \cite{Maskit:book}]\label{ej:bernie}
Let us consider an admissible configuration ${\mathfrak C}=\{C_{i},C'_{i}\}_{i \in {\mathbb N}}$ consisting solely of circles in ${\mathbb C}$. Let us assume that, for each $i$, the circles $C_{i}$ and $C'_{i}$ have the same radius. Let $m_{i}$ be the middle point of the arc-line $l_{i}$ connecting the centers of $C_{i}$ and $C'_{i}$. Let $\tau_{i}$ be the reflection on $C_{i}$ and let $\eta_{i}$ be the reflection on the orthogonal line to $l_{i}$ at $m_{i}$. The transformation $A_{i}=\eta_{i} \circ \tau_{i} \in {\rm PSL}_{2}({\mathbb C})$ is a loxodromic transformation whose isometric circle is $C_{i}$ (and $C'_{i}$ is the isometric circle of $A_{i}^{-1}$). Let us consider the infinitely generated Schottky-like group $\Gamma=\langle A_{1},\ldots\rangle$. As a consequence of \cite[Proposition A.4.]{Maskit:book}, ${\rm Ext}({\mathfrak C})^{0}$ is a fundamental domain for $\Gamma$. But, it might be that its limit set is not totally disconnected.
\end{rema}

%%%%%%%%%%%%%
\subsubsection{\bf Topological Retrosection Theorem}
If $\Gamma$ is an infinitely generated Schottky group, then $\Omega(\Gamma)/\Gamma$ is an infinite-type Riemann surface without planar ends.
The following result provides a (topological) converse.

\begin{theo}[Topological retrosection theorem \cite{BM}]
Let $\Sigma$ be a Riemann surface of infinite type and without planar ends. Then 
\begin{enumerate}[leftmargin=15pt]
\item There is an infinitely generated classical Schottky group $\Gamma$ such that $\Omega(\Gamma)/\Gamma$ is homeomorphic to $\Sigma$. 

\item If $\Sigma$ has a bounded pants decomposition, then there is an infinitely generated Schottky group $\Gamma$, which is quasiconformally conjugated to an infinitely generated classical Schottky group,
such that $\Sigma$ and $\Omega(\Gamma)/\Gamma$ are conformally equivalent. 
\end{enumerate}
\end{theo}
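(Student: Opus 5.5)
Part (1) is purely topological, so the plan is to build a model surface first and then compare ends via the classification of infinite-type surfaces (Ker\'ekj\'art\'o--Richards \cite{Ker, Ian}). Since $\Sigma$ has no planar ends, it is determined up to homeomorphism by its end space $E=E(\Sigma)$, a closed subset of the Cantor set in which every end is accumulated by genus (and there is infinite genus). I will realise $E$ as a compact totally disconnected set $K\subset\widehat{\mathbb C}$. Then I will choose a sequence of pairs of disjoint round circles $\{C_i,C'_i\}_{i\in\mathbb N}$ in $\widehat{\mathbb C}\setminus K$ with the following properties:
\begin{itemize}
\item radii tending to zero;
\item no circle separates two others;
\item the circles accumulate exactly onto $K$, so that ${\mathfrak C}'=K$;
\item every point of $K$ is accumulated by pairs of circles lying in each neighbourhood.
\end{itemize}
One concrete way to do this is to use a nested sequence of finite partitions of $K$ into clopen pieces. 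At the $n$th level, I put one new pair of small circles near each piece, in the complement of a neighbourhood of $K$ shrinking with $n$. This gives an admissible configuration, indeed a Schottky admissible one. Let $A_i$ be the loxodromic maps of Remark~\ref{ej:bernie}, with isometric circles $C_i,C'_i$, and set $\Gamma=\langle A_i\rangle$.

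Next I verify that $\Gamma$ is an infinitely generated Schottky group in the sense of \cite{BM}. By Theorem~\ref{teo2}, $\Gamma$ is free and discrete and ${\rm Ext}({\mathfrak C})$ is precisely invariant under the identity. By Remark~\ref{ej:bernie} (Maskit's Proposition A.4), ${\rm Ext}({\mathfrak C})^{0}$ is a fundamental domain. The remaining property is that $\Lambda(\Gamma)$ is totally disconnected, and this is the step I expect to be the main obstacle. My plan is to choose the circles so small that, for any reduced word $W$ of length $n$, the disc $W({\rm Int}(\cdot))$ has spherical diameter at most $\lambda^n$ for some fixed $\lambda<1$. I would arrange this by making the radii decrease fast enough relative to the mutual distances, so that each generator contracts by a uniform factor outside its isometric circle. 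Limit points then either lie in nested intersections of shrinking discs, or in $\Gamma$-translates of $K$, which are totally disconnected. I would then argue that a connected subset of $\Lambda(\Gamma)$ must be a point. If uniform contraction fails for small generators far from $K$, I would first try to rescale them, since only the topology matters here.

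Given this, $\Omega(\Gamma)/\Gamma$ is ${\rm Ext}({\mathfrak C})^{0}$ with $C_i$ glued to $C'_i$. Its ends correspond bijectively to points of $K$, and each end has genus accumulating because every neighbourhood of a point of $K$ contains glued pairs. Hence it has no planar ends and end space $E$, so the classification gives a homeomorphism with $\Sigma$.

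For part (2), take a bounded pants decomposition of $\Sigma$. The classical model from part (1) can be chosen with the same combinatorics, so that the pants in the model are uniformly quasiconformal to those of $\Sigma$; boundedness of lengths gives a uniform bound on the distortion. This yields a quasiconformal map $\Omega(\Gamma_0)/\Gamma_0\to\Sigma$. I would lift the Beltrami differential to $\Omega(\Gamma_0)$ and extend it by $0$ on $\Lambda(\Gamma_0)$, which is legitimate if the limit set has measure zero; otherwise I would extend it by a $\Gamma_0$-invariant differential on $\Lambda(\Gamma_0)$. Solving the Beltrami equation gives $\varphi$, and $\Gamma=\varphi\Gamma_0\varphi^{-1}$ has the required properties, with $\Omega(\Gamma)/\Gamma$ conformally equivalent to $\Sigma$.
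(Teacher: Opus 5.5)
The paper gives no proof of this statement; it is quoted from \cite{BM}. So your argument has to stand on its own, and part (2) does not. The sentence ``the classical model from part (1) can be chosen with the same combinatorics, so that the pants in the model are uniformly quasiconformal to those of $\Sigma$'' is the entire content of (2), and your part (1) construction cannot supply it. That construction only encodes the end space $E(\Sigma)$, but the quasiconformal class of $\Sigma$ is not a topological invariant. For example, a ray of identical handles and the $\mathbb{Z}^{2}$-periodic surface bounding a thickened square grid in ${\mathbb R}^{3}$ are both Loch Ness monsters with periodic, hence bounded, pants decompositions. Quasiconformal maps between hyperbolic surfaces are quasi-isometries, and these two surfaces are quasi-isometric to a ray and to $\mathbb{Z}^{2}$ respectively, so they are not quasiconformally equivalent. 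Moreover, your radii are chosen only to force contraction. If the ratio of separation to radius is unbounded, the round annuli around the $C_i$ inside ${\rm Ext}({\mathfrak C})^{0}$ have unbounded moduli. The geodesics in the classes of the images of the $C_i$ then have lengths tending to $0$, and there is no uniform comparison with the pants of $\Sigma$ at all. Your remark that ``boundedness of lengths gives a uniform bound on the distortion'' also ignores twist parameters.

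What is missing is a model built from the given decomposition ${\mathcal P}$ rather than from $E(\Sigma)$. One way to do this:
\begin{enumerate}
\item Let ${\mathcal H}\subset{\mathcal P}$ be the curves dual to the edges outside a spanning tree of the dual graph. Then $X=\Sigma\setminus{\mathcal H}$ is connected and planar, and its pants are glued along a tree of valence at most $3$.
\item Realize $X$, pants by pants, as a nested round-circle domain whose pieces lie in a compact family of round pants. Then each piece is uniformly quasiconformal to the corresponding hyperbolic pants, circles at depth $k$ have diameter at most $\lambda^{k}$, and each circle is separated from the others by a ring of modulus bounded below.
\item Pair the two circles coming from each $\alpha\in{\mathcal H}$ by a M\"obius map.
\item Check (S1)--(S3), the fundamental-domain property, and total disconnectedness of the limit set (the uniform ring moduli make nested translates shrink to points).
\item Build the quasiconformal map piecewise, reducing twists modulo full Dehn twists so the dilatation stays bounded.
\end{enumerate}
Only after this does your Beltrami step apply. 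There, extending the lifted coefficient by $0$ on $\Lambda(\Gamma_{0})$ is always legitimate, whatever the measure of $\Lambda(\Gamma_0)$, since the zero coefficient is invariant.

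Part (1) is essentially workable, with two repairs. First, $A_i$ does not contract uniformly on ${\rm Ext}(C_i)$, since $|A_i'(z)|=r_i^{2}/|z-c_i|^{2}\to 1$ near $C_i$. What you need, and can arrange, is ${\rm dist}(C_i,E)\geq c\,r_i$ for all other loops $E$; this gives contraction on the other discs. Second, total disconnectedness does not follow just from listing the limit points, since a countable union of totally disconnected sets need not be totally disconnected. You need that $\Lambda(\Gamma)\cap{\rm Int}(E)$ is clopen in $\Lambda(\Gamma)$ for each $E\in{\mathfrak C}$. Then a connected subset can be pushed down word by word into a translate of $K$ or a shrinking nest. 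Finally, identifying the ends with $K$ uses that both circles of each pair lie near the same clopen piece, i.e.\ condition $(*)$ of Theorem \ref{teo6}, which your construction does provide.
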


Let $\Sigma={\mathbb H}^{2}/F$ be an infinite-type Riemann surface without planar ends, where $F$ is an infinitely generated torsion-free Fuchsian group. 
The above theorem asserts that, if $\Sigma$ satisfies some properties (for instance, with a bounded pant decomposition), then it can be uniformized by an infinitely generated Schottky group. 
Two things might happen: either $F$ is of the first kind, or it is of the second kind. In \cite{BM}, the authors observed the following fact.

\begin{theo}[\cite{BM}]
Let $\Sigma={\mathbb H}^{2}/F$ be an infinite Riemann surface without planar ends. If $\Sigma$ admits a Schottky uniformization, then $F$ is of the first kind.
\end{theo}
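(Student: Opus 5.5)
We argue by contradiction. Suppose $F$ is of the second kind; we show that $\Sigma$ then has a planar end, or that the Schottky uniformization cannot exist.

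\textbf{Setup.} Let $(\Omega(\Gamma),\Gamma,P)$ be a Schottky uniformization of $\Sigma$ by an infinitely generated Schottky group $\Gamma$ with admissible configuration ${\mathfrak C}=\{C_i,C'_i\}$. Then ${\rm Ext}({\mathfrak C})^{0}$ is a fundamental domain and $\Lambda(\Gamma)$ is totally disconnected.

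\textbf{Step 1: a cut-and-paste model of $\Sigma$.} Since ${\rm Ext}({\mathfrak C})^{0}$ is a fundamental domain, $\Sigma$ is obtained from the closure of ${\rm Ext}({\mathfrak C})^{0}$ in $\Omega(\Gamma)$ by gluing $C_i$ to $C'_i$ via $A_i$. By Theorem~\ref{teo2}, $({\rm Ext}({\mathfrak C})^{0}\cup{\mathfrak C})\subset\Omega(\Gamma)$. The points of ${\mathfrak C}'$ lying in $\Omega(\Gamma)$ are not accumulated by $\Gamma$-translates in a way compatible with the fundamental domain property. Hence, by total disconnectedness of $\Lambda(\Gamma)$, we get ${\mathfrak C}'\subset\Lambda(\Gamma)$, and every point of ${\mathfrak C}'$ corresponds to an end of $\Sigma$ accumulated by the handles produced by the pairings $C_i\leftrightarrow C'_i$.

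\textbf{Step 2: $F$ of the second kind forces a funnel.} If $F$ is of the second kind, the convex core of $\Sigma$ is a proper subsurface. There is then a geodesic boundary component $\beta$ of the convex core, or more generally a half-plane in ${\mathbb H}^2$ projecting injectively, giving an embedded funnel or half-plane region $U\subset\Sigma$ of infinite area. Since $\Sigma$ has no planar ends, an end contained in $U$ would be planar. So $U$ must be attached along a simple closed geodesic $\beta$ bounding a funnel, and the funnel's end is planar (an annulus end), contradicting the hypothesis. The exception is when the half-plane regions are not funnels, i.e.\ when they arise as "boundary at infinity" of an infinite-type surface with ideal boundary intervals.

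\textbf{Step 3: exclude ideal boundary via the Schottky structure.} This is the main obstacle. Here we use the Schottky picture: $\Sigma$ is the quotient of $\Omega(\Gamma)$, so its ideal boundary is governed by $\Lambda(\Gamma)$. Lift a half-plane region $U$ to $\Omega(\Gamma)$ through its projection to the fundamental domain. Because $U$ is simply connected, it lifts homeomorphically. Its image in $\overline{{\rm Ext}({\mathfrak C})^{0}}$ modulo pairings meets only finitely many handles along any compact piece, yet reaches the "boundary" ${\mathfrak C}'$, which is totally disconnected.

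The key point is that the frontier arc at infinity of $U$ (an interval of $\partial{\mathbb H}^2\setminus\Lambda(F)$) must map to a nondegenerate continuum in $\Lambda(\Gamma)$, or into $\Omega(\Gamma)$. To establish this, I would compare moduli: annuli in $U$ separating the ideal interval have bounded-below extremal length behavior, forcing a nontrivial continuum in the limit set. This contradicts total disconnectedness. Alternatively, the image lies in $\Omega(\Gamma)$, contradicting completeness of the covering (points of $\Omega(\Gamma)$ project to interior points of $\Sigma$).

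I expect this extremal-length step, which shows that the ideal boundary arc does not collapse to a point of ${\mathfrak C}'$, to be the hardest. I would first attempt it by using that a collapse to a single point would make $U$ a punctured neighborhood, i.e.\ a planar (puncture) end, again contradicting the hypothesis.
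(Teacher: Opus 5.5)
You have located the crux correctly, but you have not closed it, and the fallback you propose for it is wrong. (The paper gives no argument for this statement; it only quotes Basmajian--Matsuzaki.)

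\textbf{Where the gap is.} After lifting the embedded half-plane $U$ to $\Omega(\Gamma)$, you have an injective holomorphic map $\tilde f$ from a hyperbolic half-plane into $\Omega(\Gamma)$. Everything hinges on excluding that the ideal arc is sent to a single point $p\in\Lambda(\Gamma)$. Such a collapse would \emph{not} make $U$ a punctured neighbourhood of $p$ or produce a planar end. The image $\tilde f(U)$ is a simply connected region that merely accumulates at $p$; it is not a neighbourhood of $p$, and the end structure of $\Sigma$ is untouched. So the hypothesis ``no planar ends'' is irrelevant at this point. The extremal-length alternative is not stated precisely enough to be checked: a simply connected $U$ contains no annuli separating its ideal arc, and no modulus estimate is given. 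As written, Step 3 is a plan rather than a proof.

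\textbf{How to close it.} The missing ingredient is a boundary-uniqueness argument, and it is short.
\begin{enumerate}[leftmargin=15pt]
\item Use the upper half-plane model so that the ideal arc of the half-plane contains a compact real segment $J$ in its interior. Let $V_{n}=\{z:{\rm Im}(z)>0,\ {\rm dist}(z,J)<1/n\}$; for large $n$ these are convex subsets of the half-plane.
\item $F$ acts properly discontinuously on ${\mathbb H}^{2}\cup(\partial{\mathbb H}^{2}\setminus\Lambda(F))$, so points tending to $J$ leave every compact subset of $\Sigma$. Since $P\circ\tilde f$ is the embedding of $U$ in $\Sigma$, the cluster set $\bigcap_{n}\overline{\tilde f(V_{n})}$ lies in $\Lambda(\Gamma)$.
\item This cluster set is a continuum, being a nested intersection of continua. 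Total disconnectedness of $\Lambda(\Gamma)$ forces it to be a single point $p$, so $\tilde f$ extends continuously with constant value $p$ on $J$.
\item After a M\"obius normalization $p=0$, Schwarz reflection across the interior of $J$ gives a holomorphic function vanishing on a segment. Hence $\tilde f$ is constant, contradicting injectivity. Alternatively, invoke the Riesz--Privalov uniqueness theorem.
\end{enumerate}

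\textbf{What becomes unnecessary.} This argument shows more generally that any domain with totally disconnected complement is uniformized by a Fuchsian group of the first kind. Since $\Omega(\Gamma)={\mathbb H}^{2}/N$ with $N\leq F$, one gets $\Lambda(F)\supseteq\Lambda(N)=\partial{\mathbb H}^{2}$ directly. Consequently the following are all superfluous:
\begin{enumerate}[leftmargin=15pt]
\item Step 2, because funnels also contain embedded half-planes precisely invariant under the identity;
\item the claim ${\mathfrak C}'\subset\Lambda(\Gamma)$ in Step 1, which you assert without justification;
\item the no-planar-ends hypothesis.
\end{enumerate}
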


%\begin{rema}
%The above result asserts that to have a Schottky uniformization of ${\mathbb H}^{2}/F$, it is necessary for $F$ to be of the first kind. Is the converse true?
%\end{rema}

%%%%%%%%%%%%%
\subsubsection{\bf Schottky uniformizations of infinite handlebodies}
Let $\Gamma$ be an infinitely generated Schottky group with admissible configuration ${\mathfrak C}$.
One may consider the $3$-manifold $M_{\Gamma}=({\mathbb H}^{3}  \cup \Omega(\Gamma))/\Gamma$, whose conformal boundary is the infinite type Riemann surface $\Sigma=\Omega(\Gamma)/\Gamma$.
The admissible configuration ${\mathfrak C}$ induces a Schottky system ${\mathcal F}$ on $\Sigma$, and this Schottky system defines the infinite handlebodiy $M_{(\Sigma,{\mathcal F})}$.   A natural question is if $M_{\Gamma}$ and $M_{(\Sigma,{\mathcal F})}$ are homeomorphic. An answer to this question was stated in \cite{BM}.

\begin{theo}[\cite{BM}]\label{teo6}
Let $\Gamma$ be an infinitely generated Schottky group with admissible configuration ${\mathfrak C}=\{C_{i},C´_{i}\}_{i \in {\mathbb N}}$. 
\begin{enumerate}[leftmargin=15pt]
\item Assume that the configuration
 satisfies that for any subsequence $\{i_{k}\} \subset {\mathbb N}$
 $$
(*) \quad {\rm if } \; C_{i_{k}} \to x \; {\rm then } \; C'_{i_{k}} \to x, \; {\rm as } \; k \to \infty.
$$
 Then $M_{\Gamma}$ and $M_{(\Sigma,{\mathcal F})}$ are homeomorphic. Conversely, any infinite handlebody can be topologically
uniformized by such a classical Schottky group.

\item Let $\Gamma$ be an infinitely generated classical Schottky group such that $M_{\Gamma}$ is an infinite handlebody. Then there exists a homeomorphism between the space of ends of $M_{\Gamma}:=({\mathbb H}^{3} \cup \Omega(\Gamma))/\Gamma$ and the space of ends of $\Omega(\Gamma)/\Gamma$. 

\end{enumerate}
\end{theo}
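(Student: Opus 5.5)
Part (1) I would prove by building an equivariant homeomorphism from the configuration, following the finite-rank argument with extra control at the accumulation points. Part (2) would then follow from the homeomorphism in (1). For the direct statement, let $D={\rm Ext}({\mathfrak C})^{0}$, which is a fundamental domain by the definition of infinitely generated Schottky group. The first step is to build a compact piece of $M_{\Gamma}$. For each $C\in{\mathfrak C}$, take the hyperbolic plane, or more generally a properly embedded disc, $\widehat{C}\subset{\mathbb H}^{3}$ bounded by $C$. These can be chosen pairwise disjoint, since no loop separates two others (S2) and the loops are isolated (S3). Choose them so that $A_{i}(\widehat{C}_{i})=\widehat{C}'_{i}$. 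The region $\widehat{D}\subset{\mathbb H}^{3}\cup\Omega(\Gamma)$ bounded by these discs, with $D$ on its sphere boundary, is a fundamental domain for the action of $\Gamma$ on ${\mathbb H}^{3}\cup\Omega(\Gamma)$. This uses that ${\rm Ext}({\mathfrak C})$ is precisely invariant under the identity (Theorem~\ref{teo2}) together with a Poincar\'e-type argument. Gluing $\widehat{C}_{i}$ to $\widehat{C}'_{i}$ by $A_{i}$ then exhibits $M_{\Gamma}$ as a ball with countably many pairs of boundary discs identified. This is the standard picture of $M_{(\Sigma,{\mathcal F})}$: compress $\Sigma$ along the loops of ${\mathcal F}$ and attach a $3$-ball to each planar piece.

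The key step, and the main obstacle, is to prove that this quotient is genuinely a manifold with the right topology near the ends. The issue is the points of ${\mathfrak C}'$ on the sphere, which lie in $\Lambda(\Gamma)$ since $\Lambda(\Gamma)$ is totally disconnected and $D$ is a fundamental domain. Near such a point $x$, infinitely many discs $\widehat{C}_{i_k}$ accumulate. Condition $(*)$ guarantees that the partners $\widehat{C}'_{i_k}$ accumulate at the same point $x$. Hence the gluing maps $A_{i_k}$ are supported in shrinking neighbourhoods of $x$, and a neighbourhood of $x$ in $\overline{\widehat{D}}$ modulo gluings is an end neighbourhood of the form ``end of $\Sigma$ times interval''. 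I would make this precise by showing that end neighbourhoods of $M_{\Gamma}$ correspond to nested sequences of sphere discs around points of ${\mathfrak C}'$, containing complete pairs. One then builds the homeomorphism to $M_{(\Sigma,{\mathcal F})}$ piece by piece: one handle $(C_i,C'_i)$ at a time, extending an exhaustion by finite-rank handlebodies, using the finite-rank case (Theorem 1) at each stage, and checking that the limit is continuous at the ends. Without $(*)$, a handle could join two distinct ends, and the resulting manifold would not be the handlebody associated to ${\mathcal F}$. That is why $(*)$ is exactly what is needed.

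For the converse of (1), I would use the topological retrosection theorem. Realize $\Sigma$ by a classical configuration, then adjust the circles along a pants/handle exhaustion. Each new pair $C_{i},C'_{i}$ is placed inside small round discs near the end it represents, with radii tending to zero. This yields (S1), (S3) and $(*)$ by construction, and the total disconnectedness of $\Lambda(\Gamma)$ follows from the radii going to zero. For (2), with $M_{\Gamma}$ an infinite handlebody, the ends of the sphere part correspond to points of ${\mathfrak C}'$. I would define the map from ends of $\Omega(\Gamma)/\Gamma$ to ends of $M_{\Gamma}$ by inclusion of end neighbourhoods, $U\mapsto$ the component of $M_{\Gamma}\setminus K$ containing it. Surjectivity and injectivity would follow from the fact that every compact $K\subset M_{\Gamma}$ lies in a finite-rank sub-handlebody, whose complement components each meet the boundary in exactly one component, via the circle picture and Theorem~\ref{teo2}(4).
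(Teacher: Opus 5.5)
The paper gives no proof of this theorem; it only quotes \cite{BM}. Your sketch therefore has to stand on its own, and it has genuine gaps exactly where infinite generation enters.

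\textbf{Direct part of (1).} Everything rests on $\widehat D$ being a fundamental domain for $\Gamma$ on ${\mathbb H}^{3}\cup\Omega(\Gamma)$.
\begin{itemize}
\item Precise invariance of ${\rm Ext}({\mathfrak C})$ gives at most that the translates of $\widehat D$ are disjoint. With infinitely many faces, a ``Poincar\'e-type argument'' yields neither covering nor local finiteness.
\item You must exclude points $q\in{\mathbb H}^{3}$ lying in an infinite nested chain of half-spaces $g_{n}(H(E_{n}))$. For round circles this can be done. Seen from $q$, the nested caps have visual radius at least $\pi/2$, so their intersection contains an open disc missing every translate of ${\rm Ext}({\mathfrak C})\cup{\mathfrak C}$. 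By the fundamental-domain hypothesis on $\widehat{\mathbb C}$, that disc lies in $\Lambda(\Gamma)$, contradicting that $\Lambda(\Gamma)$ is nowhere dense.
\item For non-round loops, (S2)--(S3) alone do not give pairwise disjoint discs with $A_{i}(\widehat C_{i})=\widehat C'_{i}$ that are locally finite in ${\mathbb H}^{3}$. Admissible configurations need not satisfy (S1), so this needs a separate construction.
\item Your reason for ${\mathfrak C}'\subset\Lambda(\Gamma)$ is not an argument. For shrinking loops it follows from ping-pong: $A_{i}^{-1}$ maps ${\rm Ext}(C'_{i})$ onto the small disc ${\rm Int}(C_{i})$ near the accumulation point.
\end{itemize}
Once these points are settled, $M_{\Gamma}$ is literally the closed $3$-ball minus ${\mathfrak C}'$, with the half-balls over the loops removed and paired faces glued. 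That is the model of $M_{(\Sigma,{\mathcal F})}$, so no handle-by-handle limit argument is needed.

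Two further points in your key step do not hold up. First, the local model ``end of $\Sigma$ times an interval'' is wrong: $S\times[0,1]$ has non-compact frontier $S\times\{1\}$, whereas an end neighbourhood of a handlebody is a half-ball with infinitely many $1$-handles and compact frontier. Second, your explanation of (*) fails. $M_{(\Sigma,{\mathcal F})}$ is obtained by compressing $\Sigma$ along ${\mathcal F}=P({\mathfrak C})$, which reproduces ${\rm Ext}({\mathfrak C})^{0}$ with its loops capped and paired exactly as in $\widehat D$. So a handle joining two distinct points of ${\mathfrak C}'$ appears on both sides, and as written no step of your argument actually uses (*).

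\textbf{Converse and (2).} Realizing $\Sigma=\partial M$ by circles does not realize the given $M$. The direct part would give $M_{\Gamma}\cong M_{(\Sigma,{\mathcal F}_{\Gamma})}$ for the system induced by your new configuration, and you still need this to be homeomorphic to $M$. You can close this in one of two ways.
\begin{itemize}
\item Prove that an infinite handlebody is determined by its boundary, e.g.\ via the proper-homotopy classification of locally finite graphs (by rank, ends, and ends accumulated by cycles) compared with Ker\'ekj\'art\'o--Richards.
\item Build the circles from a spine of $M$. Take a spanning tree $T$ of the dual graph such that both endpoints of every non-tree edge converge to the same end of $T$; a normal spanning tree does this. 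Realize the thickened $T$ as the closed ball minus the end space of $T$, embedded in a Cantor set. Place the two small round discs of each non-tree edge on the pieces of its endpoints. This is exactly where (*) comes from.
\end{itemize}
Also, total disconnectedness of $\Lambda(\Gamma)$ does not follow from the radii tending to zero. In the paper's own example (circles $C_{i}\subset{\mathbb D}$ accumulating on $\{|z|=1\}$, $C'_{i}=\tau(C_{i})$), the radii tend to zero and (*) holds, yet $\{|z|=1\}\subset\Lambda(\Gamma)$. What you need is that ${\mathfrak C}'$ be totally disconnected, plus the same nowhere-density argument to see that nested chains of round discs shrink to points. Then a connected subset of $\Lambda(\Gamma)$ misses all translates of the loops, so it lies either in a translate of ${\mathfrak C}'$ or in a nested chain, and is a point.

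Finally, (2) cannot be deduced from (1), since (2) does not assume (*). Your separate end-correspondence argument for (2) is the right idea, but it relies on the same unproved fundamental-polyhedron picture.
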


%%%%%%%%%%%%%%%%%%%%%%%%%%%
%%%%%%%%%%%%%%%%%%%%%%%%%%%
\section{Infinite-type Schottky groups}\label{Sec:infiniteSchottky}
In this section, we define a class of Kleinian groups which we call infinite-type Schottky groups. These are of a different type from those in the previous section. In our case, the limit set might not be totally disconnected, and the region of discontinuity might not be connected (but there will always be a (unique) invariant connected component).

%%%%%%%%%%%%%%%
\subsection{Infinite-type weakly-Schottky groups}
Let ${\mathfrak C}=\{C_{i},C'_{i}\}_{i \in {\mathbb N}}$ be a Schottky admissible configuration, and let ${\mathfrak C}'$ be the set of limit points of ${\mathfrak C}$, i.e., those points $p \in \widehat{\mathbb C}$ such that there exists an infinite collection of pairwise different loops $E_{1}, E_{2}, \ldots \in {\mathfrak C}$ and points $x_{i} \in E_{i}$ for which $p$ is accumulated by the sequence $(x_{i})$.

If $\{A_{i}\}_{i \in {\mathbb N}}$ is a collection of loxodromic elements such that $A_{i}({\rm Ext}(C_{i})) = {\rm Int}(C'_{i})$, then the corresponding infinitely generated Schottky-like group $\Gamma=\langle A_{i}: i \in {\mathbb N}\rangle < {\rm PSL}_{2}({\mathbb C})$ is called an {\it infinite-type weakly-Schottky group}.

Moreover, if all the loops in the Schottky admissible system  ${\mathfrak C}$ are circles, then we say that $\Gamma$ is a {\it classical} infinite-type weakly-Schottky group.

If $X \subset \widehat{\mathbb C}$, then $\Gamma(X)$ denotes the union of the sets $A(X)$, where $A \in \Gamma$.

\begin{theo}\label{teo3}
Let $\Gamma$ be an infinite-type weakly-Schottky group defined by the Schottky admissible configuration ${\mathfrak C}$. Then
\begin{enumerate}[leftmargin=15pt]
\item ${\rm Ext}({\mathfrak C})^{0} \cup {\mathfrak C} \subset \Omega(\Gamma)$, in particular,  $\Gamma$ is discrete with $\Omega(\Gamma) \neq \emptyset$, 
\item $\Gamma$ is purely loxodromic and free on a countable number of generators, 
\item ${\rm Ext}({\mathfrak C})$ is precisely invariant under the identity in $\Gamma$, and 
\item $\Gamma({\rm Ext}({\mathfrak C})^{0} \cup {\mathfrak C}) = \Omega(\Gamma)$, in particular,  ${\rm Ext}({\mathfrak C})^{0}$ is a fundamental domain for $\Gamma$. 
\item ${\mathfrak C}' \subset \Lambda(\Gamma)$.
\item If ${\mathfrak C}'$ is totally disconnected, then $\Lambda(\Gamma)$ is totally disconnected. In particular, $\Gamma$ is an infinitely generated Schottky group.

\end{enumerate}
\end{theo}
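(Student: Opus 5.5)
Parts (1)--(3) come directly from Theorem \ref{teo2}. A Schottky admissible configuration is in particular an admissible configuration, so $\Gamma$ is an infinitely generated Schottky-like group. Hence $\Gamma$ is purely loxodromic and free on the $A_i$, and ${\rm Ext}({\mathfrak C})$ is precisely invariant under the identity. Moreover ${\rm Ext}({\mathfrak C})^{0}\cup{\mathfrak C}\subset\Omega(\Gamma)$ by (S3). The ping-pong argument behind this is standard. A reduced word $W=A_{i_1}^{\epsilon_1}\cdots A_{i_n}^{\epsilon_n}$ maps ${\rm Ext}({\mathfrak C})$ into the interior disc ${\rm Int}(C'_{i_1})$ or ${\rm Int}(C_{i_1})$ according to the sign of $\epsilon_1$. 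This disc is disjoint from ${\rm Ext}({\mathfrak C})$, which gives freeness, discreteness and precise invariance.

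The new content is in (4)--(6), and (S1) is the hypothesis that drives it. For (4), I would set $\Omega_0:=\Gamma({\rm Ext}({\mathfrak C})^{0}\cup{\mathfrak C})$. By (1) this set lies in $\Omega(\Gamma)$, so the work is the reverse inclusion. Take $z\notin\Omega_0$. Either $z\in\Gamma({\mathfrak C}')$, or $z$ lies in no translate of the closed tile. In the second case $z$ determines an infinite reduced word $A_{i_1}^{\epsilon_1}A_{i_2}^{\epsilon_2}\cdots$: $z$ lies in a nested sequence of discs $D_n=W_n({\rm Int}(C_{i_{n+1}}^{\pm}))$, each bounded by a translate of a loop in ${\mathfrak C}$.

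The key step is to show ${\rm diam}(D_n)\to 0$, or at least that $z$ is a limit of fixed points of elements of $\Gamma$, so that $z\in\Lambda(\Gamma)$. I would split into two cases.

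\textbf{Case 1: the indices $i_n$ stay in a finite set infinitely often.} Then the $D_n$ live inside a finitely generated Schottky subgroup. Shrinking there is classical: the derivatives of the ping-pong maps are uniformly contracting on the relevant discs (Theorem of \cite{Maskit:Schottky}).

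\textbf{Case 2: $i_n\to\infty$ along a subsequence.} By (S1), ${\rm diam}(C_{i_n})\to 0$. Hence the discs ${\rm Int}(C_{i_n})$ and ${\rm Int}(C'_{i_n})$ shrink to points of ${\mathfrak C}'$; this is Remark (3) after the definition of ${\rm Ext}$.

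In both cases points of $D_n$ are accumulated by translates of fixed points of elements of $\Gamma$, so $z\in\Lambda(\Gamma)$. The same argument shows $\Gamma({\mathfrak C}')\subset\Lambda(\Gamma)$.

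Part (5) I would prove directly. Let $p\in{\mathfrak C}'$, so $p$ is accumulated by points of distinct loops $E_j$. By (S1) and Remark (3), $p$ is also accumulated by the fixed points of the corresponding $A_{i_j}$, which lie in ${\rm Int}(C_{i_j})$ and ${\rm Int}(C'_{i_j})$. Hence $p\in\Lambda(\Gamma)$. The fundamental domain claim in (4) then follows: by precise invariance, no two points of ${\rm Ext}({\mathfrak C})^{0}$ are $\Gamma$-equivalent, and every point of $\Omega(\Gamma)$ has a representative in the closure of the tile.

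For (6), the plan is to describe $\Lambda(\Gamma)$ as a union of two pieces:
\[
\Lambda(\Gamma)=\Gamma({\mathfrak C}')\cup\Lambda_\infty ,
\]
where $\Lambda_\infty$ is the set of endpoints of nested sequences of discs $D_n$. Points of $\Lambda_\infty$ are separated from everything else by the loops $\partial D_n$, and these loops lie in $\Omega(\Gamma)$ with shrinking diameters. A connected subset of $\Lambda(\Gamma)$ with two points must therefore lie inside a single translate of ${\rm Ext}({\mathfrak C})$, since loops of $\Gamma({\mathfrak C})$ separate different tiles. Its intersection with such a translate lies in a translate of ${\mathfrak C}'$. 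Since $\Lambda(\Gamma)$ meets each tile only in $W({\mathfrak C}')$, which is totally disconnected, the connected subset is a point.

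The main obstacle is the diameter-shrinking step in Case 1 when infinitely many distinct generators interleave with a bounded set. I would first try to handle it by a compactness argument on nested discs: any nested intersection that is not a point is a continuum inside $\Lambda(\Gamma)$ that avoids all tiles. Such a continuum would have to be accumulated by loops of ${\mathfrak C}$ of definite size, contradicting (S1).
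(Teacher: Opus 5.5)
The gap is in (4), and (6) inherits it. Your treatment of (1)--(3) via Theorem~\ref{teo2}, and of (5) via fixed points of $A_{i_j}$ inside the shrinking discs ${\rm Int}(E_j)$, is fine and matches the paper.

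Both (4) and (6) rest on one claim: for every infinite reduced word, the nested discs $D_n=W_n({\rm Int}(E_{n+1}))$ shrink to a point, or at least $\bigcap_n D_n\subset\Lambda(\Gamma)$. Neither of your cases proves this.
\begin{itemize}
\item Case 1 only works when the letters are \emph{eventually} in a finite set; then the tail lies in a finitely generated Schottky subgroup. ``Infinitely often'' does not put the word in such a subgroup.
\item Case 2 uses the smallness of ${\rm Int}(C_{i_n})$, but $D_n$ is its image under $W_n$, which may expand enormously. (S1) controls the loops of ${\mathfrak C}$, not the translated loops $\partial D_n=W_n(E_{n+1})$.
\item The compactness repair is circular: it assumes the nested continuum lies in $\Lambda(\Gamma)$. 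It also again applies (S1) to translated loops.
\end{itemize}
What is actually needed shows up in the identity $D_{n-1}\setminus\overline{D_n}=W_n\bigl(\widehat{\mathbb C}\setminus(\overline{{\rm Int}(\bar E_n)}\cup\overline{{\rm Int}(E_{n+1})})\bigr)$, where $\bar E_n$ is the loop paired with $E_n$. So each annulus is a M\"obius copy of the region between two distinct loops of ${\mathfrak C}$. A uniform lower bound on the moduli of these regions would give shrinking at once by Gr\"otzsch's inequality, with no case split. However, (S1)--(S3) allow these moduli to tend to $0$.

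The step cannot be repaired from (S1)--(S3) alone, and the paper's one-line appeal to (S1) has the same gap. Here is a sketch of a configuration where (4) fails. Write $H(C)\subset{\mathbb H}^{3}$ for the plane bounded by a circle $C$.
\begin{itemize}
\item For $n\geq 1$, place circles $C_n$ and $C'_{n+1}$ of radius $r_n=8^{-n}$ near $2^{-n}$, at mutual distance $\delta_n$ with $\sum_n\sqrt{\delta_n/r_n}<\infty$; place $C'_1$ elsewhere. Then (S1)--(S3) hold and ${\mathfrak C}'=\{0\}$.
\item The distances $d_n={\rm dist}(H(C_n),H(C'_{n+1}))\approx 2\sqrt{\delta_n/r_n}$ are summable.
\item For $n\geq 2$, use the ${\rm PSL}_{2}({\mathbb R})$-freedom in the stabilizer of $C'_n$ to choose $A_n$ sending the foot $p_n\in H(C_n)$ of the common perpendicular to $H(C'_{n+1})$ onto the foot $q_n\in H(C'_n)$ of the common perpendicular to $H(C_{n-1})$.
\item Set $y_n=A_1\cdots A_{n-1}(q_n)=A_1\cdots A_n(p_n)$. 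Then $d(y_n,y_{n+1})=d_n$, so $y_n\to y\in{\mathbb H}^{3}$.
\item For the word $A_1A_2A_3\cdots$, the plane over $\partial D_{n-1}=A_1\cdots A_{n-1}(C'_n)$ contains $y_n$. Hence the nested half-spaces contain a common half-space, and $K=\bigcap_n D_n$ contains a round disc.
\item For $h\neq 1$, $h(K)$ is the nested intersection for the reduced form of $hA_1A_2\cdots$, which is a different infinite word. So $h(K)\cap K=\emptyset$.
\item Therefore the interior $K^{0}$ lies in $\Omega(\Gamma)$, yet it misses $\Gamma({\rm Ext}({\mathfrak C})^{0}\cup{\mathfrak C})$.
\end{itemize}
So (4), and with it your description of $\Lambda(\Gamma)$ in (6), needs an extra hypothesis. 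Examples are the uniform modulus bound above or a Poincar\'e-type completeness condition. Under such a hypothesis your argument for (4) and (6) goes through.
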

\begin{proof}
Parts (1),  (2), and (3) follow from Theorem \ref{teo2} as infinite-type weakly-Schottky groups are particular examples of infinitely generated Schottky-like groups. 
Let us now check part (4). We know (by part (1)) that $\Gamma({\rm Ext}({\mathfrak C})^{0} \cup {\mathfrak C}) \subset \Omega(\Gamma)$. 
Let $p \in \widehat{\mathbb C} \setminus \Gamma({\rm Ext}({\mathfrak C})^{0} \cup {\mathfrak C})$. Then, there is a collection of different loops $E_{1},E_{2}, \ldots \in {\mathfrak C}$ and elements $T_{i} \in \Gamma$, such that such that $p \in T_{i}({\rm Int}(E_{i}))$. But, as a consequence of the condition (S1), the point $p$ is then in the limit set of $\Gamma$.
This also ensures that ${\rm Ext}({\mathfrak C})^{0}$ is a fundamental domain for $\Gamma$.
Part (5) follows from condition (S1).
Part (6) follows from the fact that $\Lambda(\Gamma) \setminus {\mathfrak C}'$ is totally disconnected.
\end{proof}

\begin{rema}
\mbox{}
\begin{enumerate}[leftmargin=15pt]
\item For an infinitely generated Schottky-like group $\Gamma$, with associated admissible configuration ${\mathfrak C}$,  there might be that some points in ${\mathfrak C}'$ belong to $\Omega(\Gamma)$. The above result asserts that this is not the case for the case when ${\mathfrak C}$ is a Schottky admissible configuration.

\item If $\Gamma$ is an infinite-type weakly-Schottky group, then (i) $\Omega(\Gamma)$ is not necessarily connected, (ii) there might not be a $\Gamma$-invariant connected component of $\Omega$, 
and (iii) if $\Omega(\Gamma)$ is connected, then its limit set does not need to be a Cantor set.
\end{enumerate}
\end{rema}

\begin{example}
In this example, we describe an infinite-type weakly-Schottky group $\Gamma$, with a non-connected region of discontinuity $\Omega(\Gamma)$, such that there is no $\Gamma$-invariant component of $\Omega(\Gamma)$.
Let us consider the unit circle $C=\{z \in {\mathbb C}: |z|=1\}$ and the reflection $\tau(z)=1/\overline{z}$. Inside the unit disk ${\mathbb D}=\{z \in {\mathbb C}: |z|<1\}$ consider a collection of pairwise disjoint closed discs $D_{1}, D_{2},\ldots$ such that they accumulate at $C$. Let $C_{i}$ be the boundary circle of $D_{i}$ and $\tau_{i}$ the reflection whose locus of fixed points is $C_{i}$. Set $C'_{i}=\tau(C_{i})$ and $A_{i}=\tau \circ \tau_{i}$. Then $\Gamma=\langle A_{1},\ldots\rangle$ is an infinite-type weakly-Schottky group. Let us observe that in this case $C \subset \Lambda(\Gamma)$. If $\Delta$ is a connected component of $\Omega(\Gamma)$, then it is contained in either of the two open discs  ${\mathbb D}$ and $\widehat{\mathbb C} \setminus ({\mathbb D} \cup C)$. As $A_{1}$ (respectively, $A_{1}^{-1}$) sends the first (respectively, second) disk to the second (respectively, first) disk, $\Delta$ cannot be $\Gamma$-invariant.
\end{example}

%%%%%%%%%%%%%%%%%%
\subsection{\bf Infinite-type Schottky groups}
An infinite-type weakly-Schottky group $\Gamma$ is called an {\it infinite-type Schottky group} if $\Omega(\Gamma)$ has a $\Gamma$-invariant connected component $\Omega$.

As $\Gamma$ is a non-elementary Kleinian group, it contains at most two $\Gamma$-invariant connected components \cite[Proposition E.9.]{Maskit:book}.

\begin{prop}
Let $\Gamma$ be an infinite-type Schottky group, with $\Gamma$-invariant connected component $\Omega \subset \Omega(\Gamma)$. If $\Omega(\Gamma) \setminus \Omega \neq \emptyset$, then each of its connected components $\Delta$ is a topological disk and has a trivial $\Gamma$-stabilizer.
\end{prop}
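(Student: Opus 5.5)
The plan is to use the fundamental-domain description from Theorem \ref{teo3}(4): $\Omega(\Gamma)=\Gamma({\rm Ext}({\mathfrak C})^{0}\cup{\mathfrak C})$. The first step is to locate $\Omega$ relative to this set. The set $F_{0}:={\rm Ext}({\mathfrak C})^{0}\cup{\mathfrak C}$ may be disconnected. However, each loop $C\in{\mathfrak C}$ together with a small collar $U_{C}$ (Remark \ref{obs1}(3)) lies in $\Omega(\Gamma)$. Moreover, each generator $A_{i}$ identifies $C_{i}$ with $C'_{i}$.

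I claim that $\Omega$ contains every loop of ${\mathfrak C}$, and hence all of $\Gamma({\mathfrak C})$.

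\begin{itemize}
\item First, $\Omega$ meets some translate $T(F_{0})$ and is $\Gamma$-invariant, so $\Omega$ meets $F_{0}$.
\item Suppose $\Omega$ contains a loop $C_{i}$ (or $C'_{i}$). Then it contains $A_{i}^{\pm1}$ of it, i.e.\ the partner loop.
\item Next I want to show that the component of ${\rm Ext}({\mathfrak C})^{0}$ adjacent to one loop is adjacent to the others, or at least that $\Omega$ cannot miss a loop. Suppose some loop $C$ is not in $\Omega$. Its component $\Delta'$ of $\Omega(\Gamma)$ contains $C$. The stabilizer of $\Delta'$ then contains $A_{i}$, and in fact contains the generators of all loops lying in $\Delta'$.
\end{itemize}

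A cleaner route avoids the claim altogether. Let $\Delta\neq\Omega$ be a component and let $H$ be its stabilizer. Since $\Delta$ is disjoint from $\Omega$, it is contained in a single complementary component $W$ of $\overline{\Omega}$.

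The main step is to show that $\Delta$ does not meet any loop of $\Gamma({\mathfrak C})$. Then $\Delta\subset\Gamma({\rm Ext}({\mathfrak C})^{0})$. Since the translates $T({\rm Ext}({\mathfrak C})^{0})$ are pairwise disjoint open sets (Theorem \ref{teo3}(3)), connectedness gives $\Delta\subset T(E)$ for a single component $E$ of ${\rm Ext}({\mathfrak C})^{0}$ and a single $T\in\Gamma$.

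Triviality of the stabilizer now follows from precise invariance. If $B\in H$, then $B(\Delta)=\Delta\subset T(E)\cap BT(E)$, so $T^{-1}BT=I$ and hence $B=I$. Then $\Delta$ is a component of $T(E)$ minus a closed set, in fact a component of the region of discontinuity lying inside a region bounded by limit points.

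For simple connectivity I would argue as follows. Any loop $\gamma\subset\Delta$ bounds, on the side not containing $\Omega$ (which is connected and disjoint from $\gamma$), a disc $D$. If $D$ contained a limit point $p$, then $p$ would be accumulated by translates of the loops $C_{i}$, by Theorem \ref{teo3}(4),(5) and condition (S1). These translates lie in $\Omega$. A connected $\Omega$ meeting ${\rm Int}(\gamma)$ and also lying outside it would have to cross $\gamma\subset\Delta$, which is a contradiction. Hence $D\subset\Omega(\Gamma)$, so $D\subset\Delta$, and $\Delta$ is simply connected. It is not the whole sphere, so it is a topological disk.

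The obstacle is the main step above: showing that all loops of ${\mathfrak C}$ lie in $\Omega$. I would argue as follows. The sets $T(\overline{{\rm Ext}({\mathfrak C})})$ tile, and the component of $\Omega(\Gamma)$ containing $C_{1}$ contains $\overline{U_{C_{1}}}$. Consider the complement of ${\mathfrak C}'$ in $\overline{{\rm Ext}({\mathfrak C})}$ and pass through the loops via the generators. A component containing one loop meets $A_{i}$-translates, and so it is invariant under a nontrivial subgroup. Using (S2'), a point $z_{0}$ sees every loop; I expect a path from near $z_{0}$ to each loop avoiding ${\mathfrak C}'$ is not guaranteed. So instead I would use that $\Omega$ is invariant under all $A_{i}$, hence it accumulates on both fixed points of every $A_{i}$, which lie in ${\rm Int}(C_{i})$ and ${\rm Int}(C'_{i})$. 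A connected open set containing points inside and outside $C_{i}$ must meet $C_{i}$. This gives $C_{i}\subset\Omega$, because $C_{i}$ is connected and contained in $\Omega(\Gamma)$. This argument is the one I would carry out in detail.
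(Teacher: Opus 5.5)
Your proof is correct, and for the stabilizer it takes a different route from the paper.

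For the disc property the two proofs agree. The paper simply invokes connectedness of $\Omega\cup\Lambda(\Gamma)$, and your Jordan-curve argument unpacks exactly this.

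For the stabilizer, the paper argues as follows:
\begin{itemize}
\item it asserts that $\Delta\subset{\rm Int}(C_{j_{0}})$ for some $j_{0}$;
\item it uses ping-pong to reduce to elements of $\langle A_{j_{0}}\rangle$;
\item it excludes those by looking at $A_{j_{0}}^{n}(\Delta)$.
\end{itemize}
You argue instead as follows:
\begin{itemize}
\item you first prove that every loop of ${\mathfrak C}$ lies in $\Omega$: the fixed points of $A_{i}$ lie in ${\rm Int}(C_{i})$ and ${\rm Int}(C'_{i})$, $\Omega$ accumulates at both, so the connected set $\Omega$ meets, hence contains, $C_{i}$;
\item you deduce from Theorem \ref{teo3}(4) that $\Delta$ lies in a single translate $T({\rm Ext}({\mathfrak C})^{0})$;
\item you finish with precise invariance (Theorem \ref{teo3}(3)).
\end{itemize}

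Your route is the more complete one. The paper's localization $\Delta\subset{\rm Int}(C_{j_{0}})$ silently needs $\Delta\cap\Gamma({\mathfrak C})=\emptyset$, which is precisely your claim. It also holds only for components disjoint from ${\rm Ext}({\mathfrak C})^{0}$: a component meeting ${\rm Ext}({\mathfrak C})^{0}$ lies in no ${\rm Int}(C_{j})$, so a preliminary translation is needed. Your argument needs no such normalization, and as a by-product it shows that the $\Gamma$-invariant component is unique.

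Two small repairs are needed.
\begin{itemize}
\item You claim every limit point is accumulated by translates of the loops. This does not follow from (S1) and Theorem \ref{teo3}(4),(5), since nothing there forces nested discs $T_{n}({\rm Int}(E_{n}))$ to shrink. It follows instead from the standard fact that for the non-elementary group $\Gamma$ the limit set lies in the closure of every orbit, so $\Lambda(\Gamma)\subset\overline{\Omega}$.
\item In the write-up the fixed-point argument must come first. Your ``cleaner route'' does not avoid the claim that all loops lie in $\Omega$; it rests on it.
\end{itemize}
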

\begin{proof}
Let $\Delta$ be a connected component of $\Omega(\Gamma) \setminus \Omega$  (if any).
Since $\Omega \cup \Lambda(\Gamma)$ is connected, it follows that $\Delta$ is a topological disk.

Let ${\mathfrak C}=\{C_{i},C'_{i}\}_{i \in {\mathbb N}}$ be a Schottky admissible configuration for $\Gamma$ with respect to the loxodromic generators $\{A_{i}\}_{i \in {\mathbb N}}$, where $A_{i}({\rm Ext}(C_{i})) = {\rm Int}(C'_{i})$.

There must be an index $j_{0} \in {\mathbb N}$ such that $\Delta \subset {\rm Int}(C_{j_{0}})$ or $\Delta \subset {\rm Int}(C'_{j_{0}})$. Without loss of generality, may assume that 
$\Delta \subset {\rm Int}(C_{j_{0}})$ (otherwise, permute the loops $C_{j_{0}}$ and $C'_{j_{0}}$, and change $A_{j_{0}}$ by its inverse).

Then, for every $i \neq j_{0}$, $A_{i}(\Delta) \subset {\rm Int}(C'_{i})$, i.e., $A_{i}(\Delta) \cap \Delta=\emptyset$. So, following a ping-pong argument, if $A \in \Gamma$ is not in $\langle A_{j_{0}}\rangle$, then $A(\Delta) \cap \Delta=\emptyset$.

As $\Delta \subset {\rm Int}(C_{j_{0}})$ is a non-empty open set, there is some positive integer $n$ such that $A_{j_{0}}^{n}(\Delta) \cap {\rm Ext}({\mathfrak C})^{0} \neq \emptyset$, again a contradiction.
\end{proof}

\begin{theo}[Retrosection theorem]\label{teoretro}
Let $\Sigma$ be an infinite-type Riemann surface without planar ends uniformized by a Fuchsian group of the first kind. Let ${\mathcal F}$ be a Schottky system of loops on $\Sigma$.
Then there is an infinite-type Schottky group $\Gamma$, with an invariant connected component  $\Omega \subset \Omega(\Gamma)$, 
together an admissible Schottky admissible configuration ${\mathfrak C}$ for it, and a regular holomorphic covering $P:\Omega(\Gamma) \to \Sigma$, with Deck group $\Gamma$, such that ${\mathcal F}$ is a subcollection of $P({\mathfrak C})$.
\end{theo}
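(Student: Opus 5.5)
The plan is to construct the covering first, abstractly, and only afterwards identify its deck group as an infinite-type Schottky group. Inside ${\mathcal F}$, choose a sub-collection ${\mathcal H}=\{\alpha_{i}\}_{i\in{\mathbb N}}$ such that $\Sigma\setminus{\mathcal H}$ is connected and planar. One way to get it is to take a spanning tree of the dual graph of the decomposition $\Sigma\setminus{\mathcal F}$ and let ${\mathcal H}$ be the loops dual to the edges not in the tree. Since there are no planar ends and every component of $\Sigma\setminus{\mathcal F}$ is a finite planar surface, ${\mathcal H}$ is infinite. Following the paper's remark, replace each loop by its geodesic representative; this uses that $F$ is of the first kind.

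Let $N\lhd \pi_{1}(\Sigma)$ be the normal closure of the classes of the loops in ${\mathcal F}$, and let $P:\widetilde\Omega\to\Sigma$ be the corresponding regular covering. Then $\widetilde\Omega={\mathbb H}^{2}/N'$, where $N'\leq F$ is the preimage of $N$. Every loop of ${\mathcal F}$ lifts to closed loops, and $\widetilde\Omega$ is obtained by gluing copies of the cut surface $S=\Sigma\setminus{\mathcal H}$, one copy for each element of the free group $\pi_{1}(\Sigma)/N$. This group is free on the generators dual to the $\alpha_{i}$, and the gluing pattern is the Cayley tree of that free group. The surface $\widetilde\Omega$ is planar: it is a tree-like union of planar pieces glued along single boundary loops, so every simple closed curve in it separates. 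By the Koebe general uniformization theorem (planar Riemann surfaces embed conformally in $\widehat{\mathbb C}$), pick a conformal embedding $\iota:\widetilde\Omega\to\widehat{\mathbb C}$. The deck transformations then become conformal automorphisms of the planar domain $\iota(\widetilde\Omega)$.

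The key step, and I expect it to be the main obstacle, is to show that these automorphisms extend to Möbius transformations. To arrange this, I would choose $\iota$ so that the complement of $\iota(\widetilde\Omega)$ consists of points and of pieces that are not removable. The most direct route seems to be a circle-domain or slit normalization: use He--Schramm, where countably many boundary components are allowed, so that automorphisms of the circle domain are Möbius. Alternatively, exhaust $S$ by finite pieces, apply the classical retrosection theorem to finite-genus compactifications, normalize, and pass to a limit via quasiconformal/normal family arguments. In either version the copy $\iota(S)$ is bounded by loops $C_{i},C'_{i}$, with $A_{i}(\mathrm{Ext}(C_{i}))=\mathrm{Int}(C'_{i})$ for the deck generators $A_{i}$. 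Condition (S2) holds because $S$ is connected and planar. Condition (S3) holds because the geodesic loops $\alpha_{i}$ have disjoint collar neighbourhoods, which embed in $\iota(S)$ closure. Condition (S1) holds because the loops are pairwise disjoint and lie in a fixed bounded region after normalization; since they bound disjoint discs of summable area, their diameters must tend to zero, after possibly slightly shrinking the loops within their collars.

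Finally, apply Theorem \ref{teo3}. It shows that $\Gamma=\langle A_{i}\rangle$ is an infinite-type weakly-Schottky group with fundamental domain $\mathrm{Ext}({\mathfrak C})^{0}$, and that $\Gamma(\mathrm{Ext}({\mathfrak C})^{0}\cup{\mathfrak C})=\Omega(\Gamma)$ contains the connected $\Gamma$-invariant set $\Omega=\iota(\widetilde\Omega)$. Here one must check that $\Omega$ is a full component, i.e. that the extra components of $\Omega(\Gamma)$ are exactly the discs produced in the limiting process. Then $\Gamma$ is an infinite-type Schottky group, and $P\circ\iota^{-1}:\Omega\to\Sigma$ is the required regular covering. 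Moreover ${\mathcal F}\subset P({\mathfrak C})$, after enlarging ${\mathfrak C}$ by lifts of the remaining loops of ${\mathcal F}\setminus{\mathcal H}$ inside $\iota(S)$, or simply noting that those loops lift to loops of $P^{-1}({\mathcal F})$.
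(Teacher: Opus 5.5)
Your architecture matches the paper's. You take the normal closure $N$ of the loops of ${\mathcal F}$ (the paper's $N_{\mathcal F}$), form the planar cover $\mathbb{H}^{2}/N$ with free deck group $F/N$, realize that cover as a plane domain on which the deck group acts by M\"obius maps, and take lifts of a non-separating subfamily ${\mathcal H}\subset{\mathcal F}$ as the configuration.

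\textbf{Main gap: the M\"obius realization.} The gap sits exactly at the step you flag. He--Schramm's theorem (circle-domain uniformization together with M\"obius rigidity) is proved only for domains with at most countably many boundary components. Your $\widetilde\Omega$ has uncountably many. Every lift of an $\alpha_i$ separates $\widetilde\Omega$, so distinct infinite reduced words in the free generators dual to the $\alpha_i$ give distinct ends of your tree of copies of $S$; already for a Schottky group of finite rank $g\geq 2$ the ends form a Cantor set. Beyond the countable case, existence of a circle-domain model is Koebe's conjecture, which is open in general. Circle domains also need not be rigid: the complement of a positive-area Cantor set is a non-rigid circle domain. So this route does not give M\"obius deck transformations.

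Your alternative (exhaust, apply the classical retrosection theorem to compactified pieces, pass to a limit) is only named, and the whole difficulty is left open. You would need injectivity of the limit embedding on all of $\widetilde\Omega$. You would need the normalized generators to converge to M\"obius maps conjugating the full deck group. And you would need to recover the conformal structure of $\Sigma$ rather than that of the modified compactifications. The paper does not prove this step by hand. It cites Theorem 2 of Haas to realize $\mathbb{H}^{2}/N_{\mathcal F}$ as a plane domain $\Omega$ with $\Gamma_{\mathcal F}\leq{\rm Aut}(\Omega)\leq{\rm PSL}_{2}({\mathbb C})$, and Haas's Theorem 7 to make $\Omega$ a pseudocircle domain. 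That linearization result for planar surfaces with uncountably many ends is the ingredient your proposal is missing.

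\textbf{Your argument for (S1) does not work.} Disjoint discs of summable area can have diameters bounded below. For instance, the thin rectangles $\{0<x<1,\ |y-1/n|<n^{-3}/10\}$ have disjoint boundary Jordan curves satisfying (S2) and (S3) but not (S1). Shrinking $C_i$ inside its lifted collar does not help either. Any such loop still encloses ${\rm Int}(C_i)\setminus\Omega=A_i^{-1}({\rm Ext}(C'_i)\setminus\Omega)$, and you have not controlled its diameter. So (S1) is a real condition on the embedding and on the choice of ${\mathcal H}$, and it still needs an argument; the paper itself only asserts that a Schottky admissible configuration exists among the lifts of ${\mathcal F}'$.

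\textbf{The first way of getting ${\mathcal F}\subset P({\mathfrak C})$ fails.} Take a lift inside $\iota(S)$ of a loop of ${\mathcal F}\setminus{\mathcal H}$. Neither side of it can be free of $C_j$'s, since the loop is essential and $\Sigma$ has no planar ends. So the lift separates loops of ${\mathfrak C}$, and adding it violates (S2). What the construction actually yields, in your proposal as in the paper, is $P({\mathfrak C})\subset{\mathcal F}$, with every loop of ${\mathcal F}$ lifting to simple loops in $\Omega$.
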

\begin{proof}
This will be done later using the $(\Sigma,{\mathcal F})$-Schottky groups to be defined laterr.
\end{proof}

\begin{rema}
In the above retrosection theorem, we have imposed that the uniformizing Fuchsian group is of the first kind. But there are infinite-type Schottky groups $\Gamma$ with $\Omega=\Omega(\Gamma)$ such that $\Omega/\Gamma$ is uniformized by a Fuchsian group of the second kind. For instance, take as ${\mathfrak C}'$ the closed real interval $[0,1]$ and as ${\mathfrak C}$ a collection of circles accumulating to the points on $[0,1]$ from the upper half-plane. In this case, we can find half-planes from the lower half-plane with $[0,1]$ as part of its boundary.
\end{rema}

\begin{rema}
Let  $\Sigma$ be an infinite-type Riemann surface with no planar ends. Let $\Gamma$ be an infinite-type Schottky group with $\Gamma$-invariant component $\Omega$, and a regular covering $P:\Omega \to \Sigma$, with ${\rm Deck}(P)=\Gamma$. Let ${\mathfrak C}$ be a Schottky admissible configuration for $\Gamma$.
Then the following facts hold.
\begin{enumerate}[leftmargin=15pt]
\item The collection $P({\mathfrak C}) \subset \Sigma$ is a Schottky system of loops on $\Sigma$. As we will see in the proof of Theorem \ref{teoretro}, every Schottky system of loops on $\Sigma$ is obtained in the above way. 

\item Each point in ${\mathfrak C}'$ induces an end of $\Sigma$ and all ends of $\Sigma$ are so obtained. It might be that two different points in ${\mathfrak C}'$ induce the same end. 
%If the configuration ${\mathfrak C}$ satisfies the condition (*) in part (1) of Theorem \ref{teo6}, then different points of ${\mathfrak C}'$ induce different ends of $\Sigma$.
\end{enumerate}
\end{rema}

\begin{theo}
Let $\Gamma$ be an infinite-type Schottky group with $\Gamma$-invariant component $\Omega$ and with admissible configuration ${\mathfrak C}=\{C_{i},C´_{i}\}_{i \in {\mathbb N}}$. 
Let us denote by ${\mathfrak L}:=\{L_{j}:j \in J\}$ the set of connected components of ${\mathfrak C}'$.
Assume that the configuration
 satisfies that for any subsequence $\{i_{k}\} \subset {\mathbb N}$
 $$
{(**) \quad \rm if } \; C_{i_{k}} \to x \in L_{j}, \; \mbox{\rm for some $j \in J$,  then } \; C'_{i_{k}} \to x' \in L_{j}, \; {\rm as } \; k \to \infty.
$$
Then there exists a homeomorphism between the space of ends of $\Omega/\Gamma$ and the space of ends of $\Omega$. 
\end{theo}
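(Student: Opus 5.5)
The plan is to pass through the set $\mathfrak{L}$ of components of $\mathfrak{C}'$ and compare both end spaces with it. On the $\Omega$ side I would use the standard description of the ends of a planar domain: for a domain $\Omega \subset \widehat{\mathbb C}$, the ends of $\Omega$ correspond bijectively and homeomorphically to the connected components of $\widehat{\mathbb C} \setminus \Omega$. The topology on these components is the quotient topology of the upper-semicontinuous decomposition.

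By Theorem \ref{teo3}(4) we have $\Omega(\Gamma)=\Gamma({\rm Ext}(\mathfrak{C})^{0}\cup\mathfrak{C})$. By the Proposition above, every component of $\Omega(\Gamma)\setminus\Omega$ is a disc with trivial stabilizer. Hence $\widehat{\mathbb C}\setminus\Omega$ is the union of the following pieces:
\begin{itemize}
\item $\Lambda(\Gamma)$, which by Theorem \ref{teo3}(5) contains $\Gamma(\mathfrak{C}')$;
\item the points of $\Lambda(\Gamma)$ obtained as nested intersections $\bigcap_n T_n({\rm Int}(E_n))$;
\item the extra discs.
\end{itemize}
Each extra disc lies in some ${\rm Int}(C)$ and is glued to $\Lambda(\Gamma)$. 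I would then show that every component of $\widehat{\mathbb C}\setminus\Omega$ meeting the closure of ${\rm Ext}(\mathfrak{C})$ is of the form $\widehat{L}_j$, the component containing a unique $L_j$. This step uses (S1) and (S3): loops shrink and are isolated, so distinct $L_j$ cannot be joined through ${\rm Int}$-discs without passing through $\Omega$.

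The next step compares the $\widehat{L}_j$ with the ends of $\Omega/\Gamma$. Let $K_n$ be an exhaustion of $\Omega/\Gamma$ by compact subsurfaces bounded by loops of $P(\mathfrak{C})$. Using ${\rm Ext}(\mathfrak{C})^{0}$ as a fundamental domain, each $K_n$ lifts to a compact piece of $\overline{{\rm Ext}(\mathfrak{C})}$ cut off by finitely many of the $C_i,C'_i$. The complementary pieces near $\mathfrak{C}'$ are neighbourhoods of unions of the $L_j$.

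Here hypothesis $(**)$ is the key input. When the surface is cut along $P(C_{i_k})=P(C'_{i_k})$, the two sides $C_{i_k}$ and $C'_{i_k}$ converge into the same $L_j$. So an end of $\Omega/\Gamma$ is never forced to identify neighbourhoods of two different components $L_j\neq L_{j'}$. This gives a well-defined map $\widehat{L}_j\mapsto$ (end of $\Omega/\Gamma$ determined by neighbourhoods of $L_j$) that is injective. It is continuous and open because basic neighbourhoods on both sides are cut out by the same finite families of loops. It is surjective by the Remark that every end of $\Sigma$ arises from a point of $\mathfrak{C}'$.

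Finally I would extend from the components $\widehat{L}_j$ meeting $\overline{{\rm Ext}(\mathfrak{C})}$ to all of the end space of $\Omega$. The remaining components are the $\Gamma$-translates $T(\widehat{L}_j)$ and the nested-intersection points. I expect this extension to be the main obstacle. Ends of $\Omega$ of the form $T(\widehat{L}_j)$ with $T\neq 1$, and the "Cantor" ends coming from infinite words, must be matched with ends of $\Omega/\Gamma$ without destroying injectivity. A genuine homeomorphism can only exist if these extra ends are absorbed, and I am not sure they can be.

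What I would try first is to show, using the ping-pong structure and $(**)$ applied to the translated configuration $T(\mathfrak{C})$, that every such end is a limit, in the end topology of $\Omega$, of ends already paired with ends of $\Omega/\Gamma$. If this fails, I would check whether the intended statement is about the ends of the fundamental region ${\rm Ext}(\mathfrak{C})^{0}\cup\mathfrak{C}$ modulo the side pairings rather than of $\Omega$ itself. The part of the argument comparing ends of $\Omega/\Gamma$ with $\mathfrak{L}$ would survive unchanged, and I would state the restriction explicitly.
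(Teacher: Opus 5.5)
\textbf{The extension step cannot be completed, because the statement as written is false.} Your first fallback would not help either. Showing that the translated and ``Cantor'' ends are limits of ends already paired with ends of $\Omega/\Gamma$ does not give a homeomorphism, which must be a bijection, and those extra ends are genuinely distinct ends of $\Omega$.

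The paper's own Construction 1 is a counterexample. There $\mathfrak{C}_1'=\{\infty\}$, so $\mathfrak{L}$ is a single point and $(**)$ holds trivially. Moreover $\Omega(\Gamma_1)/\Gamma_1$ is the Loch Ness monster, which has exactly one end. On the other hand, $\Omega=\Omega(\Gamma_1)$ contains every loop of $\mathfrak{C}_1$: it is connected, and $\mathfrak{C}_1\subset\Omega(\Gamma_1)$ by Theorem \ref{teo3}(1). Now $\widehat{\mathbb C}\setminus\Omega=\Lambda(\Gamma_1)$ contains $\infty$ and $A_j(\infty)=\tau(\tau_j(\infty))=j+i$ for every $j$. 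The loop $C'_j\subset\Omega$ separates $j+i$ from $\infty$ and from each $k+i$ with $k\neq j$. Hence $\widehat{\mathbb C}\setminus\Omega$ has infinitely many components, so $\Omega$ has infinitely many ends. The same mismatch already appears in finite rank, where $\Omega(\Gamma)/\Gamma$ is closed but $\Omega(\Gamma)$ has a Cantor set of ends.

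The paper's proof takes exactly your route. An argument as in \cite[Lemma 3.4]{BM} identifies the ends of $\Omega/\Gamma$ with $\mathfrak{L}$, and the proof then invokes ``the fact that the space of ends of $\Omega$ is $\mathfrak{L}$''. That last assertion is what fails, for the reason you identified. The set $\widehat{\mathbb C}\setminus\Omega$ is $\Gamma$-invariant and contains all translates $T(L_j)$ together with the points coming from infinite reduced words. By contrast, $\mathfrak{L}$ only accounts for the complementary components meeting $\overline{{\rm Ext}(\mathfrak{C})}$.

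So your second fallback is the correct diagnosis. What your middle paragraph outlines, and what can plausibly be proved under $(**)$, is that the end space of $\Omega/\Gamma$ is homeomorphic to $\mathfrak{L}$ with its decomposition-space topology. Compare Theorem \ref{teo6}(2), where \cite{BM} compares the ends of $\Omega(\Gamma)/\Gamma$ with those of the $3$-manifold $M_\Gamma$, not of a planar domain. You should state and prove that corrected result explicitly rather than leave the extension step open, since no extension to all ends of $\Omega$ exists.
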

\begin{proof}
The proof follows the same arguments as for the proof of \cite[Lemma 3.4]{BM}, and the fact that the space of ends of $\Omega$ is ${\mathfrak L}$.
\end{proof}

\begin{rema}
Let $\Gamma$ be an infinite-type Schottky group with $\Gamma$-invariant component $\Omega$ and with admissible configuration ${\mathfrak C}$.   If $\Omega(\Gamma) \neq \Omega$, then $M_{\Gamma}:=({\mathbb H}^{3} \cup \Omega(\Gamma))/\Gamma$ is an orientable $3$-manifold whose conformal boundary consits of $\Sigma:=\Omega/\Gamma$ and some copies of the open unit disc (these come from those connected compoents of $\Omega(\Gamma) \neq \Omega$). In the case that $\Omega(\Gamma)=\Omega$, then condition (**) in the above theorem permits to ensure that $M_{\Gamma}$ is homeomorphic to $M_{\Sigma,{\mathcal F}}$ for ${\mathcal F}$ a Schottky system of loops induced by ${\mathfrak C}$.
\end{rema}

\begin{theo}
Let $\Gamma$ be an infinite-type Schottky group with a connected region of discontinuity. Then the fundamental domain ${\rm Ext}({\mathfrak C})^{0}$ for $\Gamma$ is connected.
\end{theo}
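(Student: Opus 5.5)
We prove the contrapositive: if ${\rm Ext}({\mathfrak C})^{0}$ is disconnected, then $\Omega(\Gamma)$ is disconnected. Write $D={\rm Ext}({\mathfrak C})^{0}$. By Theorem \ref{teo3}(4) we have $\Omega(\Gamma)=\Gamma(D\cup{\mathfrak C})$, and by Theorem \ref{teo3}(5) we have ${\mathfrak C}'\subset\Lambda(\Gamma)$. Moreover $D={\rm Ext}({\mathfrak C})\setminus{\mathfrak C}'$. So the set ${\mathfrak C}'$ is exactly what may cut $D$ into several pieces, and it lies entirely in the limit set.

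\textbf{Step 1: the boundary loops of a component.} Suppose $D$ has two distinct components $D_{1}$ and $D_{2}$. Each loop $C\in{\mathfrak C}$ has a neighbourhood $U_{C}$ meeting no other loop (Remark \ref{obs1}(3)). Hence the collar $U_{C}\cap{\rm Ext}(C)$, shrunk if necessary so that it is connected and avoids ${\mathfrak C}'$, lies in exactly one component of $D$. This partitions ${\mathfrak C}$ into subfamilies ${\mathfrak C}_{k}$, where ${\mathfrak C}_{k}$ consists of the loops adjacent to $D_{k}$.

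\textbf{Step 2: a separating set.} Since $D_{1}$ and $D_{2}$ are distinct components of the open set $\widehat{\mathbb C}\setminus\big({\mathfrak C}'\cup\bigcup_{C}\overline{{\rm Int}(C)}\big)$, there is a compact set $K\subset{\mathfrak C}'\subset\Lambda(\Gamma)$ separating $D_{1}$ from $D_{2}$ in the sphere. I would build $K$ from the boundary of $D_{1}$ with the closed discs adjacent to $D_{1}$ attached, using S1 to see that these discs do not destroy closedness. However, $K$ alone need not separate $\Omega(\Gamma)$, because the translates $A_{i}(D)$ live inside the discs ${\rm Int}(C_{i}')$, and such a disc may be adjacent to $D_{1}$ while its partner disc ${\rm Int}(C_{i})$ is adjacent to $D_{2}$.

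\textbf{Step 3: the main obstacle, and the candidate decomposition.} The key difficulty is therefore a ping-pong bookkeeping argument. Assume $\Omega(\Gamma)$ is connected. For each $T\in\Gamma$ with reduced word $T=A_{i_{1}}^{\epsilon_{1}}\cdots$, the translate $T(D)$ lies in the nested disc determined by the first letter. Group the pieces $T(D_{k}\cup{\mathfrak C}_{k})$ according to the component $D_{k}$ that the nested discs ultimately sit in. This produces a candidate decomposition $\Omega(\Gamma)=W_{1}\sqcup W_{2}$ into two $\Gamma$-unrelated open sets.

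If that decomposition fails, the failure must come from the generators crossing between components. I would then use condition (S1) together with ${\mathfrak C}'\subset\Lambda(\Gamma)$ to argue that some curve in $\Omega(\Gamma)$ joining $D_{1}$ to $D_{2}$ can be pushed into $\Gamma(D\cup{\mathfrak C})$. Such a curve crosses only finitely many loops, since it is compact in $\Omega(\Gamma)$ and by S1 and S3 meets finitely many translates of the loops. Reducing its word then yields a path inside $D\cup{\mathfrak C}$ from $D_{1}$ to $D_{2}$. The reduction works by cancelling each entry into ${\rm Int}(C_{i}')$ with the matching exit via $A_{i}^{-1}$, which lands back near $C_{i}$.

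The crux, and where I expect the real work, is this: a portion of the path inside ${\rm Int}(C_{i}')$ that enters and exits through $C_{i}'$ maps under $A_{i}^{-1}$ to a portion starting and ending near $C_{i}$. By Step 1 these endpoints lie in the single component of $D$ adjacent to $C_{i}$. Iterating the reduction leaves a path in $D\cup{\mathfrak C}$ whose successive pieces stay in one component each, transitioning only through loop collars. This would show that $D_{1}$ and $D_{2}$ are the same component, a contradiction. The obstacle is to make this induction on nesting depth rigorous, ensuring the path meets only finitely many translates. That finiteness follows from the compactness of the path inside $\Omega(\Gamma)$.
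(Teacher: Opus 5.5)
\textbf{The error.} The mistake is the last sentence of your Step 2, where you claim that $K$ need not separate $\Omega(\Gamma)$. That claim is false. Because of it you abandon a complete argument for Step 3, which as written is unfinished: the decomposition $W_{1}\sqcup W_{2}$ is never defined, and the reduction is explicitly deferred as ``the real work''.

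\textbf{How Step 2 closes the proof.} Put $E_{1}=D_{1}\cup\bigcup_{C\in{\mathfrak C}_{1}}({\rm Int}(C)\cup C)$.
\begin{enumerate}[leftmargin=15pt]
\item By the exterior collars of Step 1, $E_{1}$ is open.
\item $\partial E_{1}\subset{\mathfrak C}'$. A point of $\overline{E_{1}}\setminus E_{1}$ outside ${\mathfrak C}'$ would lie either in some $D_{k}$ with $k\neq 1$, or in ${\rm Int}(C)\cup C$ for some $C\notin{\mathfrak C}_{1}$. In both cases it has a neighbourhood missing $E_{1}$: namely $D_{k}$, respectively ${\rm Int}(C)\cup C$ together with its exterior collar, which lies in a component other than $D_{1}$.
\item Since $D_{2}$ is open and disjoint from $E_{1}$, it misses $\overline{E_{1}}$.
\item Theorem~\ref{teo3}(5) gives $\Omega(\Gamma)\cap{\mathfrak C}'=\emptyset$. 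Hence $\Omega(\Gamma)=(\Omega(\Gamma)\cap E_{1})\sqcup(\Omega(\Gamma)\setminus\overline{E_{1}})$ is a splitting into two disjoint open sets containing $D_{1}$ and $D_{2}$ respectively. So $\Omega(\Gamma)$ is disconnected, which is the contrapositive.
\end{enumerate}
Your objection, that ${\rm Int}(C'_{i})$ may be attached to $D_{1}$ while its partner ${\rm Int}(C_{i})$ is attached to $D_{2}$, is beside the point. The two pieces need not be $\Gamma$-invariant, and no path in $\Omega(\Gamma)$ can cross $K$, because $K$ lies in the limit set.

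\textbf{Comparison with the paper.} This separation is exactly the idea behind the paper's own proof. That proof only asserts that ${\mathfrak C}'\subset\Lambda(\Gamma)$ cannot disconnect ${\rm Ext}({\mathfrak C})$, and it is left as a sketch ending in question marks. Its passage from ``${\mathfrak C}'$ contains no simple loop with points of $\Omega(\Gamma)$ on both sides'' to ``${\mathfrak C}'$ cannot disconnect'' is not justified. A separating closed set need not contain a simple loop, and the closed discs also take part in separating $D$. The open set $E_{1}$ and the inclusion $\partial E_{1}\subset{\mathfrak C}'$ are precisely the missing ingredient. Only (S2) and (S3) are used there; (S1) enters only through ${\mathfrak C}'\subset\Lambda(\Gamma)$.

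\textbf{Step 3 is unnecessary.} If you prefer a path argument, it is much simpler than you expect.
\begin{enumerate}[leftmargin=15pt]
\item A path in $\Omega(\Gamma)$ from $D_{1}$ to $D_{2}$ avoids ${\mathfrak C}'$, so by compactness it meets only finitely many loops of ${\mathfrak C}$.
\item The discs ${\rm Int}(C)$ are pairwise disjoint, so each maximal excursion into ${\rm Int}(C)\cup C$ starts and ends on $C$.
\item Each such excursion can be replaced by an arc in the exterior collar of $C$.
\end{enumerate}
No pulling back by $A_{i}^{-1}$, no word reduction and no induction on nesting depth are needed, because everything nested inside ${\rm Int}(C)$ is discarded at once.
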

\begin{proof}
Since $\Omega(\Gamma)$ is connected, it follows that ${\mathfrak C}'$ cannot contain a simple loop whose bounded topological discs contain points of $\Omega(\Gamma)$; in particular, it cannot disconnect ${\rm Ext}({\mathfrak C})$. ??????
\end{proof}

\section{Uniformizations by infinite-type Schottky groups: proof of Theorem \ref{teoretro}}\label{Sec:pruebateoretro}
Let $\Sigma={\mathbb H}^{2}/F$ be an infinite-type Riemann surface with no planar ends,  where $F$ is a Fuchsian group of the first kind. 
Let $\pi_{F}:{\mathbb H}^{2} \to \Sigma$ be a Galois covering with ${\rm Deck}(\pi_{F})=F$.

%%%%%%%%%%%%%%
\subsection{$(\Sigma,{\mathcal F})$-Schottky groups} \label{Schottkyinvariante}
Let ${\mathcal F}$ be a Schottky system of loops on $\Sigma$.

\subsubsection{}
The Riemann surface $\Sigma$ has a complete hyperbolic structure induced from that of ${\mathbb H}^{2}$ via $\pi_{F}$.
Each loop in ${\mathcal F}$ is isotopic to a unique closed geodesic. So, we may assume that all the loops in ${\mathcal F}$ are simple closed geodesics.

\subsubsection{}
If $\alpha \in {\mathcal F}$, then each connected component $\delta$ of $\pi_{F}^{-1}(\alpha)$ is a geodesic arc in ${\mathbb H}^{2}$. 
Let $\widetilde{\mathcal F}$ be the collection of all the geodesic arcs obtained by lifting all the geodesic loops of ${\mathcal F}$. This is a collection of pairwise disjoint geodesic arcs. Any two different geodesic arcs in $\widetilde{\mathcal F}$ have disjoint endpoints in the boundary.

\subsubsection{}
If $\delta \in \widetilde{\mathcal F}$, then its $F$-stabilizer is a cyclic group generated by a primitive hyperbolic element $A_{\delta} \in F$. If $\delta_{1}$ and $\delta_{2}$ are connected componts of $\pi_{F}^{-1}(\alpha)$, for $\alpha \in {\mathcal F}$, then $A_{\delta_{1}}$ is $F$-conjugated to either $A_{\delta_{2}}$ or $A_{\delta_{2}}^{-1}$.

\subsubsection{}
Let us denote by $N=N_{\mathcal F}$ the subgroup of $F$ generated by all the hyperbolic elements $A_{\delta}$, where $\delta \in \widetilde{\mathcal F}$. The subgroup $N_{\mathcal F}$ is a non-trivial normal subgroup of $F$, so a Fuchsian group of the first kind.

\subsubsection{}
The condition for ${\mathcal F}$ to be a Schottky system of loops ensures that $\Omega={\mathbb H}^{2}/N_{\mathcal F}$ is a planar Riemann surface with $\Gamma_{\mathcal F}=F/N_{\mathcal F}$ (a free group of infinite rank) as a group of conformal automorphisms such that $\Omega/\Gamma_{\mathcal F}=\Sigma$.

\subsubsection{}
Let $\pi_{N_{\mathcal F}}:{\mathbb H}^{2} \to \Omega$ be a Galois covering with ${\rm Deck}(\pi_{N_{\mathcal F}})=N_{\mathcal F}$, and 
$\pi_{\Gamma_{\mathcal F}}:\Omega \to \Sigma$ be a Galois covering with ${\rm Deck}(\pi_{\Gamma_{\mathcal F}})=\Gamma_{\mathcal F}$.

\subsubsection{}
By \cite[Thm 2]{Haas}, we may think of $\Omega$ as a domain inside the Riemann sphere $\widehat{\mathbb C}$ and $\Gamma_{\mathcal F} \leq {\rm Aut}(\Omega) \leq {\rm PSL}_{2}({\mathbb C})$. Moreover, from 
\cite[Thm 7]{Haas}, we may also assume that $\Omega$ is a pseudocircle domain (i.e., its boundary consists of circles, single point boundary components, and limit boundary components \cite[Section 4]{Haas}). The domain $\Omega$ is a connected component of the region of discontinuity of $\Gamma_{\mathcal F}$.

\subsubsection{}
Each loop $\alpha \in {\mathcal F}$ lifts, under $\pi_{\Gamma_{\mathcal F}}$, to a collection of simple loops in $\Omega$. Let us denote by  ${\mathcal G} \subset \Omega$ such a collection of simple loops.
Note that ${\mathcal G}=\pi_{N_{\mathcal F}}(\widetilde{\mathcal F})$.

If $\beta \in {\mathcal G}$, then $\pi_{\Gamma_{\mathcal F}}:\beta \to \alpha=\pi_{\Gamma_{\mathcal F}}(\beta)$ is a homeomorphism, and for each connected component $Y$ of $\Omega \setminus {\mathcal G}$ the restriction $\pi_{\Gamma_{\mathcal F}}:Y \to X=\pi_{\Gamma_{\mathcal F}}(Y)$ is also a homeomorphism (note that $X$ is a connected component of $\Sigma \setminus {\mathcal F}$).

\subsubsection{}
Let us observe that it might be that $\widehat{\mathbb C} \setminus \Omega$ has a non-empty interior, but its boundary is the limit set of $\Gamma_{\mathcal F}$. Each connected component of $\Omega(\Gamma_{\mathcal F}) \setminus \Omega$ is a topological disk with trivial $\Gamma_{\mathcal F}$-stabilizer. We call $\Gamma_{\mathcal F}$ a $(\Sigma,{\mathcal F})$-Schottky group.

\begin{theo}
Every $(\Sigma,{\mathcal F})$-Schottky group is an infinite-type Schottky group.
\end{theo}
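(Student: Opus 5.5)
To show that $\Gamma_{\mathcal F}$ is an infinite-type Schottky group, I will exhibit a Schottky admissible configuration ${\mathfrak C}$ inside $\Omega$ and loxodromic generators $A_i\in\Gamma_{\mathcal F}$ with $A_i({\rm Ext}(C_i))={\rm Int}(C'_i)$. Once that is done, $\Gamma_{\mathcal F}$ is an infinite-type weakly-Schottky group, and since $\Omega$ is by construction a $\Gamma_{\mathcal F}$-invariant component of $\Omega(\Gamma_{\mathcal F})$, it is an infinite-type Schottky group.

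\textbf{Step 1: choose the cutting loops.} Since every component of $\Sigma\setminus{\mathcal F}$ is a finite planar surface, the dual graph of ${\mathcal F}$ (one vertex per component, one edge per loop) is a connected, locally finite, countable graph. I pick a spanning tree $T$ of it. The loops corresponding to edges not in $T$ form a sub-collection ${\mathcal H}=\{\alpha_i\}_{i\in\mathbb N}$. The set $\Sigma\setminus{\mathcal H}$ is connected and planar, because it is a tree of planar pieces glued along the loops of $T$. The collection ${\mathcal H}$ is infinite because $\Sigma$ has infinite genus.

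\textbf{Step 2: lift to a fundamental domain.} Lift the connected planar surface $\Sigma\setminus{\mathcal H}$ homeomorphically, via $\pi_{\Gamma_{\mathcal F}}$, to a domain $D\subset\Omega$. This works by 5.1.8: each piece lifts homeomorphically to a component $Y$ of $\Omega\setminus{\mathcal G}$, and the pieces are glued along lifts of the tree edges. Each $\alpha_i$ appears twice on the boundary of $D$, as lifts $C_i$ and $C'_i$, and there is a unique $A_i\in\Gamma_{\mathcal F}$ with $A_i(C_i)=C'_i$ mapping the side of $C_i$ facing $D$ to the side of $C'_i$ away from $D$. The loops in ${\mathfrak C}$ are pairwise disjoint and lie in ${\mathcal G}$. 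No loop separates two others, since all of them bound $D$, which lies in ${\rm Ext}$ of each; this gives (S2). Since $D$ is mapped homeomorphically onto $\Sigma\setminus{\mathcal H}$, the standard Poincaré/Maskit argument shows that $D\cup{\mathfrak C}$ tiles $\Omega$ under $\Gamma_{\mathcal F}$. Hence the $A_i$ generate $\Gamma_{\mathcal F}$, and ${\rm Int}(C_i)$, ${\rm Int}(C'_i)$ are the complementary discs. The $A_i$ are loxodromic, since $\Gamma_{\mathcal F}$ is torsion free and, being the quotient $F/N_{\mathcal F}$ acting on a planar hyperbolic cover, has no parabolics associated to these loops; one can also see this from the ping-pong.

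\textbf{Step 3: (S3).} Each loop of ${\mathcal G}$ is a geodesic in the hyperbolic metric of $\Omega$ and has a collar neighbourhood, of width bounded below by the length of $\alpha$, that is disjoint from the other geodesics of ${\mathcal G}$. This collar is an open set in $\widehat{\mathbb C}$ around $C$ that misses all other loops, which gives (S3).

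\textbf{Step 4: (S1), the main obstacle.} I must show that the spherical diameters of $C_i, C'_i$ tend to $0$. My plan is to argue by contradiction. Suppose infinitely many of the loops have spherical diameter at least $\varepsilon$. These loops bound pairwise disjoint discs ${\rm Int}(C)$, so their areas tend to zero. A subsequence then Hausdorff-converges to a nondegenerate continuum $K$ contained in ${\mathfrak C}'\subset\partial\Omega\cup\Omega$, and by (S3) in fact $K\subset\partial\Omega$. At this point I would use that $\Omega$ is a pseudocircle domain (Haas), together with the first-kind hypothesis on $F$. The pseudocircle property says every boundary component is a circle, a point, or a limit component. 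The first-kind hypothesis is what should exclude a curve $C$ staying large while getting squeezed against a boundary arc: otherwise the hyperbolic lengths of the $C_i$ would have to blow up, or an open arc of $K$ would be approached only from one side. The latter would produce a free boundary arc for $\Omega/\Gamma$, that is, force $F$ to be of the second kind, which is exactly the phenomenon in the remark following Theorem \ref{teoretro}.

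If this direct argument fails, I would instead use the freedom in the uniformization. I would replace the embedding of $\Omega$ by a suitable conformal one, or replace ${\mathfrak C}$ by its $\Gamma_{\mathcal F}$-translates, exploiting that the translates $T(C)$ of a fixed loop under distinct $T\in\Gamma_{\mathcal F}$ have diameters tending to $0$ by discreteness, much as in Theorem \ref{teo3}(4). The goal would be to choose the lifts $D$, that is, the positions of the loops $C_i$, so that the spherical diameters go to zero.
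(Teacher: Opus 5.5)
Your Steps 1--3 follow the paper's route, made explicit. The paper picks ${\mathcal F}'\subset{\mathcal F}$ with $\Sigma\setminus{\mathcal F}'$ a (planar) domain, lifts it, and then simply asserts that $\pi_{N_{\mathcal F}}(\widetilde{\mathcal F}')$ contains a Schottky admissible configuration.

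The following parts of your proposal are fine:
\begin{itemize}
\item the spanning-tree choice of ${\mathcal H}$ and the homeomorphic lift $D$, which is unique up to one element of $\Gamma_{\mathcal F}$;
\item the pairings $A_i$ and (S2);
\item generation, using local finiteness of ${\mathcal G}$ in $\Omega$;
\item loxodromicity, best seen from $A_i$ mapping the closed disc $\overline{{\rm Ext}(C_i)}$ into its interior;
\item (S3), most simply from local finiteness (the collar width decreases with the length; it is not bounded below by it).
\end{itemize}

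The gap is Step 4. Condition (S1) is the only non-formal point of the theorem, and you do not prove it. The paper's proof does not supply it either, so there is nothing there to lean on. Your Hausdorff-limit argument does settle one case. Infinitely many loops of diameter $\ge\varepsilon$ would subconverge to a nondegenerate continuum $K\subset\partial\Omega$. Hence (S1) holds for every spanning tree whenever $\partial\Omega$ is totally disconnected. The difficulty is entirely in the case where $\partial\Omega$ has nondegenerate components. The paper explicitly allows this, for example the boundaries of the disc components of $\Omega(\Gamma_{\mathcal F})\setminus\Omega$.

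In that case your proposed mechanisms give no contradiction.
\begin{itemize}
\item The hyperbolic length of $C_i$ in $\Omega$ equals that of $\alpha_i$ in $\Sigma$, and nothing bounds these lengths. So ``the lengths would blow up'' is harmless.
\item Being approached from one side does not create a free boundary arc. The first-kind hypothesis excludes boundary arcs with a one-sided neighbourhood free of holes. It does not exclude nondegenerate boundary continua approached from one side along which holes accumulate. Examples are $\partial\mathbb D$ for $\mathbb D$ minus an orbit of a cocompact Fuchsian group, or $\partial\Delta$ for a disc component $\Delta$ as above.
\item The side pairings give no size constraint. A loxodromic with large multiplier, such as $z\mapsto\mu z$, can map the exterior of one long thin Jordan region onto the interior of another. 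Such regions can be built from small discs about $0$ and $\infty$ with thin sectors along the two halves of the real axis.
\item Your fallback fails as stated. With $T$ fixed, $D$ is rigid up to one translate, so the $C_i$ cannot be moved independently. Discreteness only shrinks translates of a \emph{fixed} compact set, whereas the $C_i$ lie in infinitely many $\Gamma_{\mathcal F}$-orbits.
\item Re-embedding $\Omega$ replaces $\Gamma_{\mathcal F}$ by a different group, while (S1) is only M\"obius invariant.
\end{itemize}

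In fact (S1) genuinely depends on $T$. Slide infinitely many handles along thin bands parallel to a nondegenerate boundary component. This yields a Schottky system ${\mathcal F}$ and a spanning tree whose lift $D$ has infinitely many boundary geodesics of spherical diameter bounded below: each one must enclose limit points at both ends of its band. Meanwhile, another spanning tree of the same ${\mathcal F}$ gives a genuine configuration. So Step 4 cannot succeed for an arbitrary tree. The missing ingredient is a rule for choosing $T$ (equivalently ${\mathcal H}$) that forces the boundary loops of $D$ to shrink, together with a proof that such a choice exists.
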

\begin{proof}
Take a subcollection ${\mathcal F}'$ of ${\mathcal F}$ such that $\Sigma \setminus {\mathcal F}'$ is a domain. Lift this collection, under $\pi_{F}$, to obtain a subcollection $\widetilde{\mathcal F}'$ of $\widetilde{\mathcal F}$. Now consider the subcollection $\pi_{N_{\mathcal F}}(\widetilde{\mathcal F}')$ of ${\mathcal G}$. Inside $\pi_{N_{\mathcal F}}(\widetilde{\mathcal F}')$ there is a Schottky admissible configuration for $\Gamma$.
\end{proof}

As a consequence, we obtain the desired result.

\begin{coro}
Every Riemann surface $\Sigma$ of infinite-type without planar ends and uniformized by a Fuchsian group of the first kind can be uniformized by a $(\Sigma,{\mathcal F})$-Schottky group $\Gamma$. Moreover, $\Gamma$  can be chosen so that ${\rm Aut}(\Omega(\Gamma) \leq {\rm PSL}_{2}({\mathbb C})$.
\end{coro}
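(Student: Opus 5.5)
The plan is to deduce the corollary directly from the preceding construction and theorem. First, choose any Schottky system of loops ${\mathcal F}$ on $\Sigma$. Such a system exists: $\Sigma$ has a pants decomposition, and the interior cuffs of a pants decomposition form a Schottky system, since each complementary component is a pair of pants, hence a finite planar surface. Because $F$ is of the first kind, we may replace each loop by its geodesic representative without losing the Schottky property.

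Next, form the normal subgroup $N_{\mathcal F}\lhd F$ generated by the hyperbolic elements $A_{\delta}$, $\delta\in\widetilde{\mathcal F}$. We then need to check that ${\mathbb H}^{2}/N_{\mathcal F}$ is planar. The key observation is that $N_{\mathcal F}$ is precisely the kernel of the map from $\pi_1(\Sigma)$ to the free group obtained by killing the loops of ${\mathcal F}$. The cover ${\mathbb H}^{2}/N_{\mathcal F}$ is therefore assembled from copies of the finite planar components of $\Sigma\setminus{\mathcal F}$, glued along lifts of the loops in the pattern of a tree (the Bass--Serre tree of the corresponding graph of groups with trivial vertex groups after quotienting). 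A tree-like union of planar pieces glued along separating circles is planar, because every loop in it is separating.

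Then we invoke Haas's theorem to realize $\Omega={\mathbb H}^{2}/N_{\mathcal F}$ as a pseudocircle domain in $\widehat{\mathbb C}$. This also shows that its conformal automorphisms, in particular $\Gamma_{\mathcal F}=F/N_{\mathcal F}$, are restrictions of M\"obius transformations, which gives ${\rm Aut}(\Omega)\leq{\rm PSL}_{2}({\mathbb C})$ as claimed in the second assertion. The covering $\pi_{\Gamma_{\mathcal F}}:\Omega\to\Sigma$ has deck group $\Gamma_{\mathcal F}$, so $(\Omega,\Gamma_{\mathcal F},\pi_{\Gamma_{\mathcal F}})$ is a uniformization. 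Finally, the previous theorem ensures that $\Gamma_{\mathcal F}$ is an infinite-type Schottky group with invariant component $\Omega$. This yields both the corollary and the retrosection theorem (Theorem~\ref{teoretro}), since ${\mathcal F}\subset P({\mathfrak C})$ by construction.

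The main obstacle is the theorem just invoked, namely extracting from the lifts of a non-separating subfamily ${\mathcal F}'$ a configuration $\{C_i,C'_i\}$ satisfying (S1)--(S3) with generators $A_i$ pairing them. I would proceed in three steps. First, pick a connected lift $Y$ of $\Sigma\setminus{\mathcal F}'$, whose boundary loops come in pairs $C_i,C'_i$ with $A_i\in\Gamma_{\mathcal F}$ sending one to the other. Second, verify (S3) from the local finiteness of geodesic lifts in $\Omega$. Third, verify (S1) by noting that there are only finitely many loops of spherical diameter larger than $\varepsilon$, because $\Omega$ is a pseudocircle domain and the loops are pairwise disjoint and bound disjoint discs of area bounded below.

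One point needs explicit care: the second assertion reads ${\rm Aut}(\Omega(\Gamma))$, whereas Haas's theorem gives the statement for the component $\Omega$. I would state it for $\Omega$, and note that it extends to $\Omega(\Gamma)$ only when $\Omega(\Gamma)=\Omega$.
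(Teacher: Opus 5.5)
Your derivation of the corollary follows the paper's own route. You choose a Schottky system (e.g.\ from a pants decomposition, straightened to geodesics since $F$ is of the first kind), form $N_{\mathcal F}$, realize ${\mathbb H}^{2}/N_{\mathcal F}$ as a planar domain via Haas, and invoke the preceding theorem that $\Gamma_{\mathcal F}$ is an infinite-type Schottky group. Your tree-of-pieces argument for planarity usefully justifies a step the paper only asserts.

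The gap is in your sketch of that preceding theorem, at (S1). Disjointness of the discs ${\rm Int}(C)$, $C\in{\mathfrak C}$, only makes their spherical areas summable, hence tending to $0$. It says nothing about diameters: a Jordan curve of spherical diameter $\geq\varepsilon$ can bound a disc of arbitrarily small area, e.g.\ a thin finger around an arc. ``Area bounded below'' does hold for round circles, which is why configurations of circles are automatically Schottky admissible, but your loops are lifted geodesics, not circles. The pseudocircle normalization does not rescue this. It constrains the boundary components of $\Omega$, whereas each $C_{i}$ lies inside $\Omega$, and ${\rm Int}(C_{i})=A_{i}^{-1}({\rm Ext}(C'_{i}))\supset A_{i}^{-1}(Y)$ encloses infinitely many such components. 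Nothing in your argument prevents ${\rm Int}(C_{i})$ from being long and thin. You need an argument that controls the shape of these discs, not merely their area. The paper itself only asserts that a Schottky admissible configuration sits inside $\pi_{N_{\mathcal F}}(\widetilde{\mathcal F}')$, so this is exactly where real work is needed if you want to prove that theorem rather than cite it.

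There are also three smaller corrections.
\begin{enumerate}
\item ${\mathcal F}'$ must be a \emph{maximal} non-separating family: the loops corresponding to edges off a spanning tree of the dual graph of $\Sigma\setminus{\mathcal F}$. Only then is $\Sigma\setminus{\mathcal F}'$ planar and homeomorphic to its lift $Y$, with boundary loops paired by a free basis of $\Gamma_{\mathcal F}$.
\item For the ``moreover'' clause, take ${\rm Aut}(\Omega)\leq{\rm PSL}_{2}({\mathbb C})$ directly from Haas's linearization result. The paper cites his Theorem 2 for this, and his Theorem 7 separately for the pseudocircle form. Deducing it from the pseudocircle shape would be a rigidity claim that the boundary shape alone does not give. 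Your reading of the clause as a statement about ${\rm Aut}(\Omega)$ matches the paper's later use.
\item Your closing inclusion is backwards. ${\mathfrak C}$ consists of boundary loops of a lift of $\Sigma\setminus{\mathcal F}'$, so $P({\mathfrak C})={\mathcal F}'\subseteq{\mathcal F}$. Here $\Sigma\setminus{\mathcal F}'$ is connected, while $\Sigma\setminus{\mathcal F}$ never is, because ${\mathcal F}$ is infinite and each complementary piece has finitely many boundary loops. Hence ${\mathcal F}\subseteq P({\mathfrak C})$ cannot hold. This is consistent with the paper's introduction, where ${\mathfrak C}$ projects to a sub-collection of ${\mathcal F}$.
\end{enumerate}
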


%{\color{red} How to ensure  that we may chose $\Gamma$ such that $\widehat{\mathbb C} \setminus \Omega$ is a Cantor set ????. This is needed to obtain a Schottky group of infinite rank.} 

%%%%%%%%%%%%%%%
%%%%%%%%%%%%%%%
\section{A lifting problem for infinite-type Schottky groups}
Let $\Sigma$ be an infinite-type surface (not necessarily a Riemann surface) without planar ends and let $G < {\rm Hom}^{+}(\Sigma)$ act freely and properly discontinuously. 
So, the quotient $\Sigma/G$ is a surface, and there is a Galois covering $Q:\Sigma \to \Sigma/G$ with ${\rm Deck}(Q)=G$.

%%%%%%%%%%%%%%%%
\begin{rema}
We may always find a Riemann surface structure of $\Sigma$ such that $G$ is a group of conformal automorphisms. In fact, let us consider a Riemann surface structure on the surface $\Sigma/G$, say induced by a Fuchsian group $\Lambda$ of the first kind; in other words, $\Sigma/G$ is homeomorphic to ${\mathbb H}^{2}/\Lambda$. 
By lifting such a Riemann surface structure to $\Sigma$, under $Q$, we obtain a Riemann surface structure on $\Sigma$ for which $G$ is a group of conformal automorphisms and $Q$ is a holomorphic covering map.
Such a Riemann structure on $\Sigma$ corresponds to a Fuchsian group $F \leq {\rm PSL}_{2}({\mathbb R})$, with $F \lhd \Lambda$ and $G=\Lambda/F \leq {\rm Aut}(\Sigma_{F})$, where $\Sigma_{F}={\mathbb H}^{2}/F$ (which is homeomorphic to $\Sigma$), and $\Sigma_{F}/G$ is homeomorphic to $\Sigma/G$. The Fuchsian group $F$ is of the first kind as $\Lambda$ is of the first kind, and $F$ is a non-trivial normal subgroup.
\end{rema}

Let us consider Fuchsian groups, of the first kind, $F$ and $\Lambda$, such that $F \lhd \Lambda$, $\Lambda/F \cong G$, 
$\Sigma$ is homeomorphic to $\Sigma_{F}={\mathbb H}^{2}/F$, $\Sigma/G$ is homeomorphic to ${\mathbb H}^{2}/\Lambda$, and with $Q$ induced by the  inclusion of $F$ in $\Lambda$. We identify $G$ with $\Lambda/F$.

Let $\pi_{F}:{\mathbb H}^{2} \to \Sigma_{F}$ be a Galois covering with ${\rm Deck}(\pi_{F})=F$.

%%%%%%%%%%%%%
\subsection{Lifting property}
Assume $\Gamma$ is an infinite-type Schottky group, with $\Gamma$-invariant component  $\Omega$, such that there is  a conformal homeomorphism $\varphi:\Sigma_{F} \to \Omega/\Gamma$ (the existence of $\Gamma$ is ensured by Theorem \ref{teoretro}). Let $P:\Omega \to \Omega/\Gamma$ be a Galois covering with ${\rm Deck}(P)=\Gamma$ (i.e., a Schottky uniformization). Let us consider the group $\varphi G \varphi^{-1} \leq {\rm Aut}(\Omega/\Gamma)$. 

We say that $G$ {\it lifts} with respect to the Schottky uniformization $P$, if for every $f \in G$ there is some $\widehat{f} \in {\rm Aut}(\Omega)$ such that 
$P \circ \widehat{f}= (\phi \circ f \circ \phi^{-1}) \circ P.$

\begin{rema}
As seen in the proof of Theorem \ref{teoretro}, we may assume that ${\rm Aut}(\Omega) < {\rm PSL}_{2}({\mathbb C})$. In particular, the liftings $\widehat{f}$ are M\"obius transformations.
\end{rema}

\begin{rema}
If ${\mathfrak C}$ is a Schottky admissible configuration that defines $\Gamma$, then ${\mathcal F}=\varphi^{-1}(P({\mathfrak C}))$ is a Schottky system of loops on $\Sigma_{F}$. Conversely, as seen in the construction in Section \ref{Sec:pruebateoretro}, each Schottky system of loops ${\mathcal F}$ on $\Sigma_{F}$ can be obtained in that way for $\Gamma=\Gamma_{\mathcal F}$.
\end{rema}

%%%%%%%%%%%%%%%%
\subsection{Finding conditions for the lifting}
Let us consider a Schottky system of loops ${\mathcal F}$ on $\Sigma_{F}$ and its associated $(\Sigma,{\mathcal F})$-Schottky group $\Gamma_{\mathcal F}$.
Without loss of generality, may assume that each loop in ${\mathcal F}$ is a simple closed geodesic of $\Sigma_{F}$. 

Let $\Omega$ be the $\Gamma_{\mathcal F}$-invariant connected component of $\Omega(\Gamma_{\mathcal F})$, and $P:\Omega \to \Sigma_{F}$ be a Galois covering with ${\rm Deck}(P)=\Gamma_{\mathcal F}$.

Following the previous notations from Section \ref{Sec:pruebateoretro}, we set $N_{\mathcal F}$ as the normal subgroup of $F$ induced by ${\mathcal F}$ and 
let $\pi_{\mathcal F}:{\mathbb H}^{2} \to \Omega$ be a Galois covering with ${\rm Deck}(\pi_{\mathcal F})=N_{\mathcal F}$.

We have the inclusions
$$N_{\mathcal F} \lhd F \lhd \Lambda, \; G=\Lambda/F, \; \Gamma_{\mathcal F}=F/N_{\mathcal F}.$$

Let us observe that $G$ lifts with respect to the Schottky uniformization $P$ if and only if $N_{\mathcal F}$ is a normal subgroup of $\Lambda$. This asserts the following fact.

\begin{prop}
If the Schottky system of loops ${\mathcal F}$ is $G$-invariant, then $N_{\mathcal F} \lhd \Lambda$, so $G$ lifts with respect to $P$.
\end{prop}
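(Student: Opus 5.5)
The plan is to show $L^{-1}N_{\mathcal F}L \subseteq N_{\mathcal F}$ for every $L \in \Lambda$. Since $N_{\mathcal F}$ is generated by the primitive hyperbolic elements $A_{\delta}$, $\delta \in \widetilde{\mathcal F}$, it suffices to check that each conjugate $L^{-1}A_{\delta}L$ is again one of these generators (or an inverse of one). The lifting statement then follows from the observation made just before the proposition: $G$ lifts with respect to $P$ if and only if $N_{\mathcal F} \lhd \Lambda$.

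The first step is to transfer the $G$-invariance of ${\mathcal F}$ to ${\mathbb H}^{2}$. Every $L \in \Lambda$ normalizes $F$, so it descends to an automorphism $f=[L] \in G=\Lambda/F$ of $\Sigma_{F}$ with $\pi_{F}\circ L=f\circ \pi_{F}$. Because ${\mathcal F}$ is $G$-invariant, $f$ permutes the loops of ${\mathcal F}$. We may take the loops to be geodesics, since $G$ acts by isometries, and an isometry maps the geodesic representative of $\alpha$ to the geodesic representative of $f(\alpha)$. It follows that $L(\pi_{F}^{-1}({\mathcal F}))=\pi_{F}^{-1}(f({\mathcal F}))=\pi_{F}^{-1}({\mathcal F})$. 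As $L$ is a homeomorphism, it sends connected components to connected components, so $L$ permutes $\widetilde{\mathcal F}$.

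The second step is to compare stabilizers. For $\delta \in \widetilde{\mathcal F}$, put $\delta'=L^{-1}(\delta) \in \widetilde{\mathcal F}$. Then ${\rm Stab}_{F}(\delta')=L^{-1}{\rm Stab}_{F}(\delta)L$, using that $L^{-1}FL=F$. Hence $L^{-1}\langle A_{\delta}\rangle L=\langle A_{\delta'}\rangle$. Conjugation preserves primitivity inside $F$, so $L^{-1}A_{\delta}L=A_{\delta'}^{\pm 1}\in N_{\mathcal F}$. Applying the same argument to $L^{-1}$ gives the reverse inclusion, and therefore $N_{\mathcal F}\lhd\Lambda$.

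The final step is to translate normality into liftability. With $N_{\mathcal F}\lhd \Lambda$, each $L\in\Lambda$ descends through $\pi_{\mathcal F}$ to a conformal automorphism $\widehat{f}$ of $\Omega={\mathbb H}^{2}/N_{\mathcal F}$ satisfying $P\circ\widehat f=f\circ P$. By the remark after Theorem \ref{teoretro}, this $\widehat f$ is M\"obius.

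The only delicate point I expect is the first step: making sure that $G$-invariance of ${\mathcal F}$ is compatible with replacing the loops by geodesics. If the loops are only isotopic to geodesics, I would note that $f({\mathcal F})={\mathcal F}$ forces $f$ to permute the geodesic representatives, because $f$ is an isometry and geodesic representatives are unique. Apart from this, the argument is routine group theory.
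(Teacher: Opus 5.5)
Your proposal is correct and follows the paper's route. The paper only observes that $G$ lifts with respect to $P$ exactly when $N_{\mathcal F}\lhd\Lambda$, and treats that normality as evident from $G$-invariance; your argument supplies the details it leaves implicit: each $L\in\Lambda$ permutes $\widetilde{\mathcal F}$, so $L^{-1}{\rm Stab}_F(\delta)L={\rm Stab}_F(L^{-1}(\delta))$ and $L^{-1}A_\delta L=A_{L^{-1}(\delta)}^{\pm1}$, and $L$ then descends through $\pi_{\mathcal F}$ to some $\widehat f$ with $P\circ\widehat f=f\circ P$.
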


\medskip
\noindent
{\bf Question 2:} {\it Is it true the converse?}

\medskip

In the following, we provide an affirmative answer to the above in the particular case that $S/G$ is of finite type.

\begin{theo}\label{teolifting}
Let us assume that $\Sigma_{F}/G$ is of finite type. Then $G$ lifts with respect to $P:\Omega \to \Sigma_{F}$ with ${\rm Deck}(\Gamma_{\mathcal F})$ if and only if there exists 
is a $G$-invariant Schottky system of loops $\widehat{\mathcal F}$ defining $N_{\mathcal F}$.
\end{theo}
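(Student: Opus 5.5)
The easy direction has already been done: if $\widehat{\mathcal F}$ is a $G$-invariant Schottky system with $N_{\widehat{\mathcal F}}=N_{\mathcal F}$, then the preceding Proposition gives $N_{\mathcal F}\lhd\Lambda$, and so $G$ lifts. For the converse, I will assume that $G$ lifts, which by the remark before the Proposition means $N_{\mathcal F}\lhd \Lambda$. I will then construct a $G$-invariant Schottky system on $\Sigma_{F}$ whose associated normal subgroup is still $N_{\mathcal F}$.

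The plan is to pass to the intermediate quotient and work equivariantly there. Set $\widehat{\Gamma}=\Lambda/N_{\mathcal F}$. It acts on the planar domain $\Omega={\mathbb H}^{2}/N_{\mathcal F}$ as a group of M\"obius transformations (by the remark that ${\rm Aut}(\Omega)<{\rm PSL}_{2}({\mathbb C})$). It contains $\Gamma_{\mathcal F}$ as a normal subgroup with quotient $G$, and $\Omega/\widehat{\Gamma}={\mathbb H}^{2}/\Lambda=\Sigma_{F}/G$ is of finite type. Since $G$ acts freely, $\widehat{\Gamma}$ acts freely on $\Omega$. It is also finitely generated, because the orbifold $\Sigma_F/G$ is of finite type, so $\Lambda$ is finitely generated. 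Hence $\widehat{\Gamma}$ is a finitely generated function group with invariant component $\Omega$, whose quotient is a finite-type surface $S_{0}=\Sigma_F/G$. In $S_{0}$ I want a finite collection ${\mathcal F}_{0}$ of pairwise disjoint simple loops with two properties. First, each loop of ${\mathcal F}_{0}$ lifts to closed loops in $\Omega$. Second, the normal closure in $\Lambda$ of the corresponding elements is exactly $N_{\mathcal F}$. This is the finite-type situation of the planarity/loop theorem (Maskit's planarity theorem for a regular planar covering of a finite-type surface). Applied to the planar regular covering $\Omega\to S_{0}$ with deck group $\widehat{\Gamma}$, it gives a finite set of disjoint simple loops on $S_{0}$ whose normal closure is the kernel $N_{\mathcal F}$ of $\Lambda\to\widehat{\Gamma}$. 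Once ${\mathcal F}_{0}$ is found, I will set $\widehat{\mathcal F}=Q^{-1}({\mathcal F}_{0})$. This collection is automatically $G$-invariant, and it consists of pairwise disjoint simple loops, because each loop of ${\mathcal F}_0$ lifts to loops (its class lies in $N_{\mathcal F}\subset F$). Its normal subgroup $N_{\widehat{\mathcal F}}$ is the normal closure in $F$ of the lifted classes. Since $\Lambda$-conjugates of these classes are $F$-conjugates of $G$-translates, and the family of lifts is $G$-invariant, this normal closure coincides with the $\Lambda$-normal closure, namely $N_{\mathcal F}$.

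It remains to check that $\widehat{\mathcal F}$ is a Schottky system: its loops must be essential and pairwise non-parallel (after discarding duplicates), and the complementary pieces must be finite planar surfaces. Planarity of the pieces follows because each piece embeds in $\Omega$, since it lifts homeomorphically to the planar domain. Finiteness of each piece holds because it covers a complementary piece of ${\mathcal F}_{0}$ in $S_{0}$, which is of finite type. That cover is trivial, as the piece is simply lifted by a homeomorphism, so no element of $F\setminus N_{\mathcal F}$ is represented inside it. I also need to enlarge ${\mathcal F}_{0}$, if necessary, so that its complement in $S_{0}$ consists of planar pieces whose fundamental groups map trivially to $G$. I would do this by adding the loops of $S_{0}$ needed to cut away genus and ends, choosing them among curves whose classes lie in $N_{\mathcal F}$. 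This is possible because $N_{\mathcal F}$ is the kernel of a map to a free-by-finite group of Schottky type.

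I expect the main obstacle to be the equivariant loop step: producing ${\mathcal F}_{0}$ on $S_{0}$ with normal closure exactly $N_{\mathcal F}$, and with complementary components that become finite planar surfaces upstairs. In the closed case this is the equivariant loop theorem \cite{M-Y}, or the Kleinian argument of \cite{H-M}. Here I would try to adapt the argument of \cite{H-M} to the finitely generated group $\widehat{\Gamma}$. I would use the Klein--Maskit structure of finitely generated function groups with a regular planar covering of a finite-type surface: $\widehat{\Gamma}$ should decompose as a free product, via combination theorems, of finite extensions of cyclic pieces, which yields the required invariant structure loops. The finite-type hypothesis on $\Sigma_F/G$ is exactly what makes this available.
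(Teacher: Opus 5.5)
The step that fails is the enlargement of ${\mathcal F}_0$, and your finiteness and planarity checks fail with it.

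For $Q^{-1}({\mathcal F}_0)$ to be a Schottky system defining $N_{\mathcal F}$, two things are needed. Every loop of ${\mathcal F}_0$ must have finite-order image in $G$, so that its preimages are loops rather than lines. Every complementary piece $X$ of ${\mathcal F}_0$ in $S_0$ must have finite image in $\widehat{\Gamma}=\Lambda/N_{\mathcal F}$; otherwise the components of $Q^{-1}(X)$ are infinite covers of $X$. Your claims that each piece ``lifts homeomorphically'' to $\Omega$ and that ``the cover is trivial'' presuppose exactly this. Maskit's planarity theorem does not supply it, since its complementary pieces generally have non-trivial image in the deck group.

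Your justification, that $\widehat{\Gamma}$ is free-by-finite, is wrong. $\widehat{\Gamma}$ is an extension of the free group $\Gamma_{\mathcal F}$ by $G$, and $G$ is infinite, because $\Sigma_F$ has infinite type while $\Sigma_F/G$ does not. Moreover, if the enlarged ${\mathcal F}_0$ existed, its lifts would cut the planar domain $\Omega$ along a tree. $\widehat{\Gamma}$ would act on that tree with finite vertex and edge stabilizers and finite quotient graph, so $\widehat{\Gamma}$ would be virtually free. The hypothesis $N_{\mathcal F}\lhd\Lambda$ does not force this.

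A smaller, repairable point: the planarity theorem only presents $N_{\mathcal F}$ as the normal closure of powers $w_i^{\alpha_i}$. When $\widehat{\Gamma}$ has torsion, the $w_i$ themselves are not in $N_{\mathcal F}$. This can be fixed, since $Q^{-1}(w_i)$ then consists of loops representing conjugates of $w_i^{\alpha_i}$.

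The gap cannot be closed, because the ``only if'' direction fails in the paper's own Loch Ness example. There $\Lambda=K$ uniformizes a closed genus-two surface $S$, $F=K'$ and $G\cong{\mathbb Z}^4$.

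\begin{itemize}
\item Let $w$ be a separating simple loop on $S$ and $N$ its normal closure in $K$. Then $N\subset K'$ and $K/N\cong{\mathbb Z}^2\ast{\mathbb Z}^2$. By Kurosh, $K'/N$ is free.
\item ${\mathbb H}^2/N$ is planar: it is glued along a tree from the planar ${\mathbb Z}^2$-covers of the two one-holed tori of $S\setminus w$.
\item Each component of $\widetilde S\setminus Q^{-1}(w)$ is a plane minus a lattice of discs. Its fundamental group is the commutator subgroup of a rank-two free group, which is normally generated by $w$, so every loop in it lies in $N$.
\item Add, in each such component, a nested exhausting sequence of loops. This gives a Schottky system ${\mathcal F}$, not $G$-invariant, with $N_{\mathcal F}=N\lhd\Lambda$. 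Hence $G$ lifts.
\end{itemize}

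Yet the Claim in the last section shows that $\widetilde S$ carries no $G$-invariant Schottky system at all. In your terms: $G$ is torsion-free, so every loop of ${\mathcal F}_0$ is null-homologous, hence separating. So ${\mathcal F}_0$ is at most one separating curve, and its complementary one-holed tori map onto a ${\mathbb Z}^2\le G$. Consistently, ${\mathbb Z}^2\ast{\mathbb Z}^2$ is not virtually free.

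The paper argues differently, by a Meeks--Yau type induction on $\Sigma_F$ itself. It uses the collar constant $L_0$, discreteness of the simple length spectrum, and the fact that shortest geodesics lying in $N_{\mathcal F}$ are disjoint from their $G$-translates. That argument has the same hole: it produces a $G$-invariant disjoint family of geodesics in $N_{\mathcal F}$ but never shows the complementary pieces are finite planar, and the example shows they need not be. Since virtual freeness of $\Lambda/N_{\mathcal F}$ is necessary for the conclusion, any correct version of the statement needs a hypothesis implying it.
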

\begin{proof}
There exists $L_{0}>0$ such that any two simple closed geodesics of length less than or equal to $L_{0}$ are disjoint. 

Our hypothesis on the quotient $\Sigma_{F}/G$ ensures that the length spectrum of simple closed geodesics of length bigger than $L_{0}$ is discrete (i.e., such a spectrum is an increasing sequence $l_{1} <l_{2}<\ldots$ that only accumulates at $\infty$.

Let us consider a maximal subcollection ${\mathcal A}_{0}$ of simple closed geodesics of length at most $L_{0}$ that lift to loops on $\Omega$ under $P$.
Such a collection is $G$-invariant.

Next, in the complement $\Sigma_{F} \setminus {\mathcal A}_{0}$, we take any simple closed geodesic $\alpha$ of minimal length bigger than $L_{0}$ that lifts to loops on $\Omega$ under $P$.
 If there is some $T \in G$ such that $T(\alpha) \cap \alpha \neq \emptyset$, then we can find a shorter geodesic with the same property. This is a contradiction. 

We may proceed inductively to obtain $\widehat{\mathcal F}$.

\end{proof}

\medskip
\noindent
{\bf Question 3:} With respect to the previous results:
\begin{enumerate}[leftmargin=15pt]
\item {\it Can we delete the condition for $\Sigma_{F}/G$ to be of finite type?}

\item {\it Can we assume $\Gamma$ to be an infinite-type Schottky group with a connected region of discontinuity?}

\item {\it Is there an infinite-type Schottky group $\Gamma$ with connected region of discontinuity such that $\Sigma$ is homeomorphic to $\Omega(\Gamma)/\Gamma$ and $M_{\Gamma}$ is an infinite handlebody such that the group $G$ lifts to $\Omega(\Gamma)$?}

\item {\it Is there a Schottky system of loops ${\mathcal F}$ on $\Sigma$ such that the group $G$ extends as a group of orientation-preserving homeomorphisms of $M_{(\Sigma,{\mathcal F})}$?}

\end{enumerate}

\medskip

%%%%%%%%%%%%%%%%%
%%%%%%%%%%%%%%%%%
\section{An Example: the Loch Ness Monster}

%%%%%%%%%%%%%%
\subsection{Non-lifting examples}
Let $S={\mathbb H}^{2}/K$ be a closed Riemann surface of genus two. Its homology cover $\Sigma=\widetilde{S}={\mathbb H}^{2}/K'$, where $K'$ is the commutator subgroup of $K$, is homeomorphic to the Loch Ness monster (LNM). The Riemann surface $\widetilde{S}$ admits a group $G \cong {\mathbb Z}^{4}$ as a group of conformal automorphisms such that $S=\widetilde{S}/G$.

In this case, by Theorem \ref{teolifting}, the extension of $G$ to a $\Sigma$-handlebody is equivalent to the existence of a $G$-invariant Schottky system of loops on $\Sigma$. The following provides a negative answer.

\begin{claim}
There is no $G$-invariant Schottky system of loops on $\widetilde{S}$.
\end{claim}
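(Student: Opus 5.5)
The plan is to argue by contradiction: push a hypothetical $G$-invariant Schottky system ${\mathcal F}$ on $\widetilde{S}$ down to $S$ via $Q:\widetilde{S}\to S=\widetilde{S}/G$, and show that some component of $\widetilde{S}\setminus{\mathcal F}$ must have infinitely many boundary loops, so it cannot be a finite planar surface.

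\emph{Step 1 (projecting the loops).} Take $\alpha\in{\mathcal F}$. Its $G$-stabilizer acts freely and properly discontinuously on the compact loop $\alpha$, so it is finite. Since $G\cong{\mathbb Z}^{4}$ is torsion-free, the stabilizer is trivial. Because ${\mathcal F}$ is $G$-invariant and its loops are pairwise disjoint, any two loops $T(\alpha)$ and $\alpha$, with $T\in G$, are either equal or disjoint. Hence $Q|_{\alpha}$ is injective, and $Q(\alpha)$ is a simple closed curve in $S$. Moreover, images of loops in different $G$-orbits are disjoint.

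\emph{Step 2 (the projected curves are separating).} A closed curve in $S$ lifts to a closed curve in the homology cover $\widetilde{S}={\mathbb H}^{2}/K'$ exactly when its class lies in $K'$, i.e.\ when it is null-homologous. So ${\mathcal F}/G=\{\beta_{1},\ldots,\beta_{k}\}$ is a collection of pairwise disjoint simple closed curves in $S$, each null-homologous and hence separating. It is finite because $S$ is compact and the curves are essential and pairwise disjoint, though finiteness is not actually needed below.

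\emph{Step 3 (locating a bad component).} The components of $\widetilde{S}\setminus{\mathcal F}$ are exactly the components of $Q^{-1}(X)$, where $X$ runs over the components of $S\setminus\bigcup\beta_{j}$. Cutting a closed surface along separating curves preserves total genus, so some component $X$ has genus $h\geq 1$. If $k=0$, then $X=S$ and the component is $\widetilde{S}$ itself, which is not planar, giving the contradiction directly. Assume therefore $k\ge 1$. Let $\widetilde{X}$ be a component of $Q^{-1}(X)$. Then $\widetilde{X}\to X$ is the regular cover whose deck group is the image of $H_{1}(X)\to H_{1}(S)$. Since $X$ contains two simple curves meeting once, this image contains a rank-two symplectic subgroup ${\mathbb Z}^{2}$ and is in particular infinite. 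The stabilizer of $\widetilde{X}$ in $G$ is that infinite image. Each boundary curve $\beta_{j}$ of $X$ has closed lifts (Step 2) with trivial stabilizer (Step 1), so the infinite stabilizer of $\widetilde{X}$ moves each such lift through infinitely many distinct loops of ${\mathcal F}$ in $\partial\widetilde{X}$. Thus $\widetilde{X}$ has infinitely many boundary loops and is not a finite planar surface, contradicting that ${\mathcal F}$ is a Schottky system.

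The main obstacle is Step 3. One cannot simply argue that $\widetilde{X}$ has genus: the ${\mathbb Z}^{2}$-cover of a one-holed torus is in fact planar. The contradiction therefore has to come from counting boundary components, and this needs care in identifying the deck group of $\widetilde{X}\to X$ with the image of $H_{1}(X)$ in $H_{1}(S)$ and in checking that this image is infinite.
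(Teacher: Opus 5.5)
Your proof is correct, but it reaches the contradiction by a different route than the paper.

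\textbf{The paper's route.} The paper projects ${\mathcal F}$ to a finite family ${\mathcal L}$ on $S$ and asserts that every component of $S\setminus{\mathcal L}$ is planar. It then notes, as in your Step 2, that each loop of ${\mathcal L}$ is dividing because $\widetilde{S}$ is the homology cover. It concludes because a dividing loop on a genus-two surface cuts off two one-holed tori. The planarity of the pieces downstairs is left implicit there. It comes from the fact that a finite-type component of $\widetilde{S}\setminus{\mathcal F}$ has finite stabilizer, hence trivial stabilizer in the torsion-free group $G$, so it maps homeomorphically onto its image.

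\textbf{Your route.} You argue in the opposite direction. You find a positive-genus piece $X$ downstairs by genus additivity. This handles any finite family of separating loops at once, without the paper's implicit reduction to a single curve via essentiality and non-parallelism downstairs. You then show that the pieces above $X$ have infinitely many boundary loops. Your observation that these pieces can genuinely be planar is exactly the point the paper glosses over: the contradiction has to come from counting boundary loops, not from genus upstairs.

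\textbf{A simplification.} Identifying the deck group of $\widetilde{X}\to X$ with the image of $H_{1}(X)$ in $H_{1}(S)$, and using the symplectic pair, is more than you need. There is a simpler dichotomy. If the stabilizer of $\widetilde{X}$ is trivial, then $\widetilde{X}\cong X$ has positive genus. If it is nontrivial, it is infinite because $G\cong{\mathbb Z}^{4}$ is torsion-free, and your boundary-loop count applies. This dichotomy is precisely the step the paper leaves implicit.
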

\begin{proof}
Let us assume there is a Schottky system of loops (geodesics) ${\mathcal F}$ on $\widetilde{S}$ which is $G$-invariant. The collection ${\mathcal F}$ will project to a finite collection ${\mathcal L}$ of pairwise disjoint simple loops on $S$, each one homotopically non-trivial and pairwise non-parallel, such that each connected component of $S \setminus {\mathcal L}$ is planar.
As we are considering the homology covering of $S$, each loop $\alpha \in {\mathcal F}$ projects to a simple loop $\beta \in {\mathcal L}$ that is dividing and homotopically non-trivial. So, $S \setminus \beta$ consists of two genus one surfaces, each with one boundary corresponding to $\beta$. This provides a contradiction to the above property on ${\mathcal L}$.
\end{proof}

The above result is in contrast to the case of free actions of finite abelian groups on closed surfaces.

%%%%%%%%%%%%%%%%
\subsection{Example of two topologically non-equivalent Schottky uniformizations of LNM}
\subsubsection{Construction 1}
Let us consider the following Schottky admissible configuration of circles 
$${\mathfrak C}_{1}=\left\{C_{j}=\{z \in {\mathbb C}: |z-j|=1/4\},C'_{j}=\{z \in {\mathbb C}: |z-(j+i)|=1/4\}; j \in {\mathbb N}\right\}.$$
In this case, ${\mathfrak C}_{1}'=\{\infty\}$.

Let $\tau(z)=\overline{z}+i$ (the reflection on the line ${\rm Im}(z)=1/2$) and $\tau_{j}$ be the reflection on the circle $C_{j}$.

If $A_{j}=\tau \circ \tau_{j}$, then $\Gamma_{1}=\langle A_{1},A_{2},\ldots\rangle$ defines an infinite-type Schottky group, with a connected region of discontinuity, such that $\Omega(\Gamma_{1})/\Gamma_{1}$ is homeomorphic to LNM.

\subsubsection{Construction 2}
Let $C=\{z \in {\mathbb C}: |z-2|=1/4\}$ and $T(z)=3z$.

Let us consider the following Schottky admissible configuration of circles 
$${\mathfrak C}_{2}=\left\{C_{j}=T^{j}(C), C'_{j}=T^{-j}(C); j \in {\mathbb N}\right\},$$
In this case ${\mathfrak C}_{2}'=\{0,\infty\}$.

If $B_{j}$ is a loxodromic transformation such that $B_{j}(C_{j})=C'_{j}$ and $B_{j}({\rm Int}(C_{j}))={\rm Ext}(C'_{j})$, then $\Gamma_{2}=\langle B_{1},B_{2},\ldots\rangle$ defines an infinite-type Schottky group, with a connected region of discontinuity, such that $\Omega(\Gamma_{2})/\Gamma_{2}$ is homeomorphic to LNM.

%%%%%%%%%%%%%%%%%%%%%%%%%%
%%%%%%%%%%%%%%%%%%%%%%%%%%

%%%%%%%%%%%%%%%%%%%%%%
%\medskip
%\noindent{\bf Acknowledgements.} 
%The author would like to express his deep gratitude to the referee for supplying very useful comments, suggestions and corrections. 

%\medskip

%\noindent{\bf Funding.} The author was partially supported by Projects ANID FONDECYT Regular 1230001. 

%\medskip

%\noindent{\bf Data Availability Statement.} No data has been used in this article.

%\medskip

%\noindent{\sc Declarations.}

%\medskip

%\noindent{\bf Conflicts of interest.} The author has no conflicts of interest to declare that are relevant to this article

%\medskip

%\noindent{\bf Declaration of generative AI and AI-assisted technologies in the manuscript preparation process}
%During the preparation of this work, the authors used Grammarly to improve the English language. The author reviewed and edited the content as needed and take full responsibility for the content of the published article.

%%%%%%%%%%%%%%%%%%%%%%%%%%%%%%
%%%%%%%%%%%%%%%%%%%%%%%%%%%%%%

\end{document}